\newtheorem{thm}{Theorem}[section]
\newtheorem*{thm*}{Theorem}
\newtheorem{cor}[thm]{Corollary}
\newtheorem{prop}[thm]{Proposition}
\newtheorem{lem}[thm]{Lemma}
\theoremstyle{definition}
\newtheorem{defn}[thm]{Definition}
\theoremstyle{remark}
\newtheorem{rem}[thm]{Remark}
\title{Thompson's group $F$, tangles, and link homology}
\begin{document}

\begin{abstract}
We extend a construction of Jones to associate $(n, n)$-tangles with elements of Thompson's group $F$ and prove that it is asymptotically faithful as $n \to\infty$. Using this construction we show that the oriented Thompson group $\vec F$ admits a lax group action on a category of Khovanov's chain complexes. 
\end{abstract}

\author{Vyacheslav Krushkal}

\author{Louisa Liles}

\author{Yangxiao Luo}

\address{Department of Mathematics, University of Virginia, Charlottesville VA 22904}
\email{krushkal@virginia.edu, lml2tb@virginia.edu, yl8by@virginia.edu}

\maketitle

\section{Introduction}
Motivated by questions in conformal field theory, Vaughan Jones initiated the study of Thompson's groups $F$ of certain piecewise linear homeomorphisms of the interval in the context of quantum topology. As part of this program, Jones constructed unitary representations of $F$ and showed how to associate a link in $3$-space with any element of the group \cite{jones14, Jones19}. 
An analogue of the Alexander theorem \cite[Theorem 5.3.1]{jones14} states that any link type arises from this construction.
Jones suggested that the Thompson group can serve a role similar to that of braid groups that have been classically used to represent links. It is worth noting that $F$ is a single group generating all links, as opposed to a collection $B_n$ of braid groups, $n\geq 1$.

Using the theory of strand diagrams developed in \cite{belk_thesis, matucci}, we extend the Jones construction to associate an $(n,n)$-tangle $ T_n(g)$, for certain values of $n$, with an element $g$ of Thompson's group $F$. We show that this construction is asymptotically faithful:
\begin{thm} \label{thm: intro aymptotic faithfulness}
    Suppose $g, h \in F$ are two distinct elements. Then for sufficiently large $n$,   the tangles $T_n(g), T_n(h)$ are not isotopic.
\end{thm}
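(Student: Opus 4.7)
The plan is to exploit the uniqueness of reduced strand diagrams for elements of $F$ and to show that the isotopy type of $T_n(g)$ retains enough of this combinatorial data once $n$ is large. Given $g \in F$, write $D_g$ for its reduced strand diagram (in the Belk--Matucci sense) and let $k_g$ denote the number of boundary strands. By construction, for $n \geq k_g$ the tangle $T_n(g)$ should be obtained from $T_{k_g}(g)$ by adjoining trivial vertical strands on either side, so it suffices to prove that distinct reduced strand diagrams, stabilized to a common large $n$, yield non-isotopic tangles.

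To separate such tangles, I would introduce an invariant capturing the planar structure inherited from the construction: the trivalent (or four-valent) graph underlying the tangle together with its connection pattern to the $2n$ boundary endpoints. Restricted to the image of $T_n$, this data should project to a canonical ``skeletal'' object which records $D_g$ modulo insertion or deletion of identity strands. I would then verify that the Belk--Matucci reduction algorithm, which produces $D_g$ from any representative of $g$, acts compatibly with this skeletal invariant, so that $T_n(g)$ and $T_n(h)$ being isotopic would force $D_g = D_h$ and hence $g = h$.

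The hard part, I expect, is controlling how three-dimensional tangle isotopy interacts with the planar skeletal data: it is not enough that the two tangle diagrams differ combinatorially, one must rule out any ambient isotopy carrying one to the other. I would address this by either (a) lifting the invariant to a planar algebra or to a Khovanov-type tangle invariant, for which isotopy invariance is automatic, or (b) giving a direct geometric argument that the $n - k_g$ trivial strands can be isolated and removed to recover an intrinsic ``core'' tangle depending only on $D_g$, and then invoking uniqueness of reduced strand diagrams. The isolation step in (b) is the most delicate point, since extracting trivial strands from a complicated tangle requires a careful splitting argument; but this is precisely the assertion that trivial stabilizations are detectable up to isotopy, which is a natural property for this Jones-type construction and is what justifies the restriction to sufficiently large $n$.
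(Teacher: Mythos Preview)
Your proposal rests on a misreading of the construction. The tangle $T_n(g)$ is \emph{not} obtained from a base tangle by adjoining trivial vertical strands as $n$ grows. Rather, one passes from $g$ to the reduced $(2^k,2^k)$-strand diagram $\Theta_k(g)=S_k*\delta(g)*S_k^*$ by conjugating the pair-of-trees diagram $\delta(g)$ by the symmetric tree $S_k$, and then applies $T$. Increasing $k$ genuinely reshapes the strand diagram, not merely by a trivial stabilization, so your isolation step (b) is not available.

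More seriously, the strategy of reading off the reduced strand diagram from the isotopy type of its tangle fails in general: the paper exhibits an infinite family of distinct reduced $(1,1)$-strand diagrams all of which $T$ sends to the trivial two-strand tangle. Hence there is no ``skeletal'' tangle invariant that recovers $D_g$ for arbitrary strand diagrams, and the plan as written has a genuine gap at precisely the point you flagged as delicate.

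The paper's argument is both different and much simpler. The key structural observation is that once $2^k$ exceeds the heights of the tree diagrams for $g$ and $h$, the strand diagrams $\Theta_k(g)$ and $\Theta_k(h)$ are \emph{biforests}: each has the form $F_1\circ F_2^{*}$ with $F_1,F_2$ forests (this follows from Lemma~\ref{lemma: difference}). For a biforest the associated tangle has a transparent shape: each split of $F_1$ contributes a turnback arc joining two bottom endpoints, and each merge of $F_2^{*}$ a turnback at the top. The pattern of which boundary points are joined by turnbacks is an elementary isotopy invariant of the tangle and determines $F_1$ and $F_2$. Since $\Theta_k$ is a group isomorphism, $g\neq h$ forces $\Theta_k(g)\neq\Theta_k(h)$, hence different turnback patterns, hence non-isotopic tangles. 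No Khovanov-type or planar-algebra machinery is needed; the whole point of passing to large $n$ is to land in the biforest regime where this crude invariant suffices.
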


This result contrasts the original construction where different group elements may give rise to the same link. In fact, finding a set of ``Markov moves'' relating any two such group elements is an open problem \cite[Remark 5.3.3 (2)]{jones14}. The construction of tangles is given in Section \ref{sec: tangles}, and a more detailed version of the above result is stated as Theorem \ref{thm: aymptotic faithfulness for unoriented F}.

It was shown in \cite{jones14, Aiello} that all oriented links arise from the ``oriented'' Thompson group $\vec F$, a certain subgroup of $F$ constructed by Jones. This notion was extended in \cite{HOMFLY} where coefficients of unitary
representations of $\vec F$ were analyzed in the context of oriented topological quantum field theory (TQFT) invariants.
We define oriented strand diagrams and use them to associate oriented tangles with elements of $\vec F$, see Section \ref{sec: orientability} for details.  

Using these methods, 
we construct an action of Thompson's group $F$ on a certain category of Khovanov's chain complexes. 
Our result may be seen as an instance of the general framework of group actions on triangulated categories. Such actions of braids groups, and more generally of mapping class groups, are an expected feature of $(3+1)$-dimensional TQFTs; see for example the discussion and references in~\cite[Section 6.5]{khovanov}.
Recall the graded ring $H^n$, constructed in \cite{khovanov} using the Temperley-Lieb $2$-category. Khovanov defined a braid group action on $\mathcal{K}^{m}_{n}$, the homotopy category of chain complexes of geometric $(H^m, H^n)$-bimodules \cite{khovanov}. Our main result, Theorem \ref{thm: action} in Section \ref{sec: Khovanov}, may be thought of as an analogue for Thompson's group. In this case the action is lax, see Definition \ref{defn: lax group action}, due to the properties of the construction of links and tangles from elements of the group $F$. This notion was also considered in \cite[Definition 8.4]{LOT}, under the name of a {\em weak action}. Theorem 15 in \cite{LOT} constructed a weak action of mapping class groups in the context of bimodules in the bordered Heegaard Floer theory.

{\em Acknowledgements.} The authors were supported in part by NSF grant DMS-2105467. Louisa Liles was also supported in part by NSF RTG grant DMS-1839968.

\section{Background on Thompson's group $F$ and strand diagrams}\label{sec:strand diagrams}
This section summarizes the relevant background on Thompson's group $F$ and its correspondence with reduced $(n,n)$-strand diagrams. For more details, we direct the reader to \cite[Chapters $1$ and $7$]{belk_thesis} and \cite{canon}.
The Thompson group $F$ consists of piecewise linear orientation-preserving self-homeomorphisms of the unit interval $[0,1]$ such that all derivatives are powers of $2$ and all points of non-differentiability occur at dyadic numbers, that is, numbers of the form $\frac{a}{2^{b}}$ for $a, b \in \mathbb{Z}$. 

One way to specify an element of the Thompson group is with a pair of standard dyadic partitions, which are partitions of the unit interval in which every sub-interval is of the form $[\frac{a}{2^{b}},\frac{a+1}{2^{b}}]$. Given an ordered pair $(I,J)$ of standard dyadic partitions with the same number of sub-intervals, there is a unique element $g \in F$ such that $g(I)=J$. Standard dyadic partitions correspond bijectively to planar, binary, rooted trees, so two such trees also determine an element of $F$. It is convenient to represent the pair of trees $T^{}_I, T^{}_J$ by drawing the union of $T^{}_I$ and the vertical reflection of $T^{}_J$, with their leaves identified, as shown on the right. (This will be seen as a special case of the product of strand diagrams below.)

\begin{figure}[ht]
\begin{center}
\includegraphics{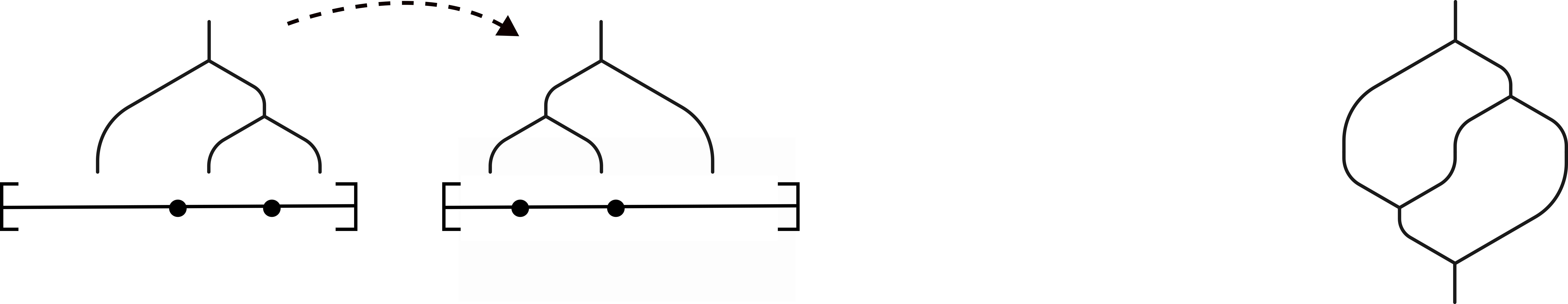}    \put(-325,85){$g(I)=J$} \put(-400,1){$I=\{0, \frac{1}{2}], [\frac{1}{2},\frac{3}{4}],[\frac{3}{4},1]\}$, $J=\{[0,\frac{1}{4}],[\frac{1}{4},\frac{1}{2}],[\frac{1}{2},1]\}$}
  \caption{An element $g \in F$ given by the ordered pair $(I,J)$ and its associated pair of trees.}\label{fig: thompson group trees} 
\end{center}
\end{figure}
Conversely, for every $g \in F$ there is a standard dyadic partition $I$ such that $g(I)$ is standard dyadic. The pair $(I,g(I))$ therefore determines $g$, but this pair is not unique. For any refinement $I'$ of $I$ that is also standard dyadic, $(I',g(I'))$ also represents $g$. The pair of trees given by $(I,g(I))$ will differ from the pair of trees given by $(I',g(I'))$ by finitely many \textit{cancelling carets}. An example of one cancelling caret is shown in Figure \ref{fig: cancelling caret example}.
\begin{figure}[ht]
\begin{center}
\includegraphics[scale=0.9]{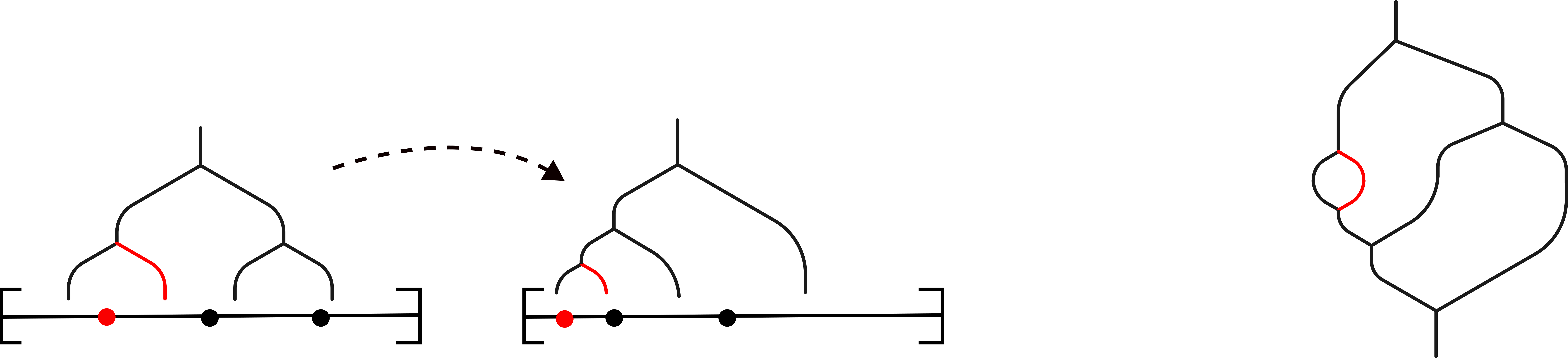} \put(-285,55){$g(I')=J'$} \put(-375,-15){$I'=\{[0, \frac{1}{4}],[\frac{1}{4}, \frac{1}{2}] ,[\frac{1}{2},\frac{3}{4}],[\frac{3}{4},1]\}$, $J'=\{[0,\frac{1}{8}],[\frac{1}{8},\frac{1}{4}],[\frac{1}{4},\frac{1}{2}],[\frac{1}{2},1]\}$.}
  \caption{The same element $g$ as in Figure \ref{fig: thompson group trees}, but the pair of partitions $(I',J')$ is a refinement of $(I,J)$ and the pair of trees differs by a cancelling caret, shown in red.}\label{fig: cancelling caret example}
\end{center}
\end{figure}

We say a pair of trees is \textit{reduced} if it does not have any cancelling carets. For example, the pair in Figure \ref{fig: thompson group trees} is reduced. It is known that elements $F$ are in bijection with reduced pairs of planar, rooted, binary trees \cite{canon}.

Jones introduced a construction of links from the elements of Thompson group $F$,  using their associated pairs of trees \cite{jones14}. In fact, it was shown in \cite{jones14} that this method gives rise to every link type.

We will generalize Jones' construction to build $(n,n)$-tangles from $F$ using \textit{strand diagrams}, objects first introduced in \cite{belk_thesis,belk_matucci}.
These can be thought of as generalizations of the previously discussed pair-of-trees diagrams. We define them as in \cite{belk_thesis}: 

\begin{defn} \label{def: strand diagram} An $(m,n)$-strand diagram $\Gamma$ is a finite graph embedded in $[0,1] \times [0,1]$ such that:
\begin{itemize}
    \item $\Gamma$ has $m$ univalent vertices  on the top edge of the square, and $n$ univalent vertices on the bottom edge of the square.
    \item Every vertex in the interior is either a merge or a split (see Figure \ref{fig: merge and split})
    \item All edges have nonzero slope.
\end{itemize}
\end{defn}

\begin{figure}[ht]
\begin{center}
\includegraphics{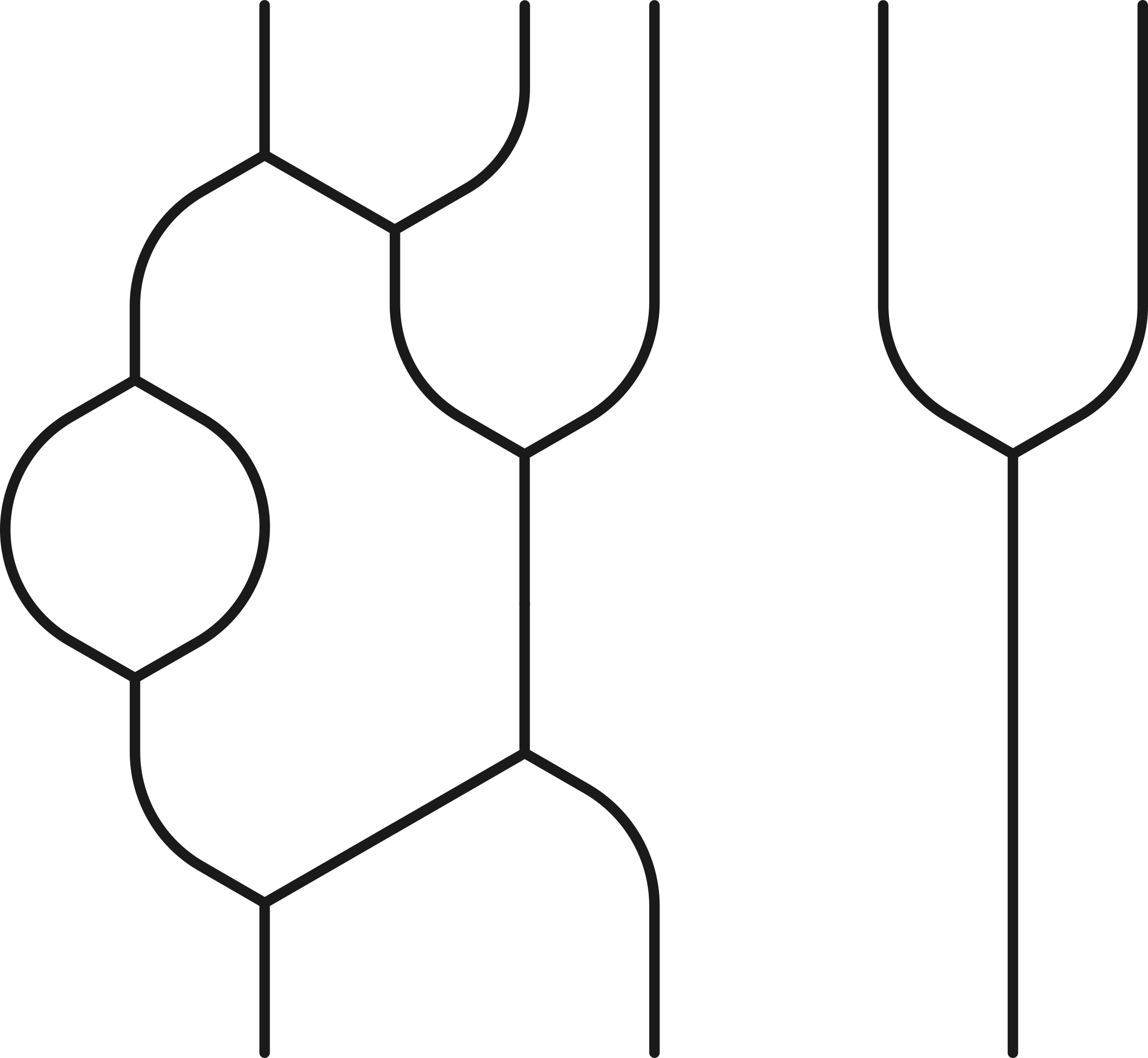}
\caption{an example of a $(5,3)$-strand diagram}\label{strand diagram example}
\end{center}
\end{figure}

Strand diagrams are similar to braids, but instead of twists there are merges and splits, which may cause the number of points at the bottom of the square to differ from the number of points at the top. 
\begin{figure}[ht]
\begin{center}
\includegraphics{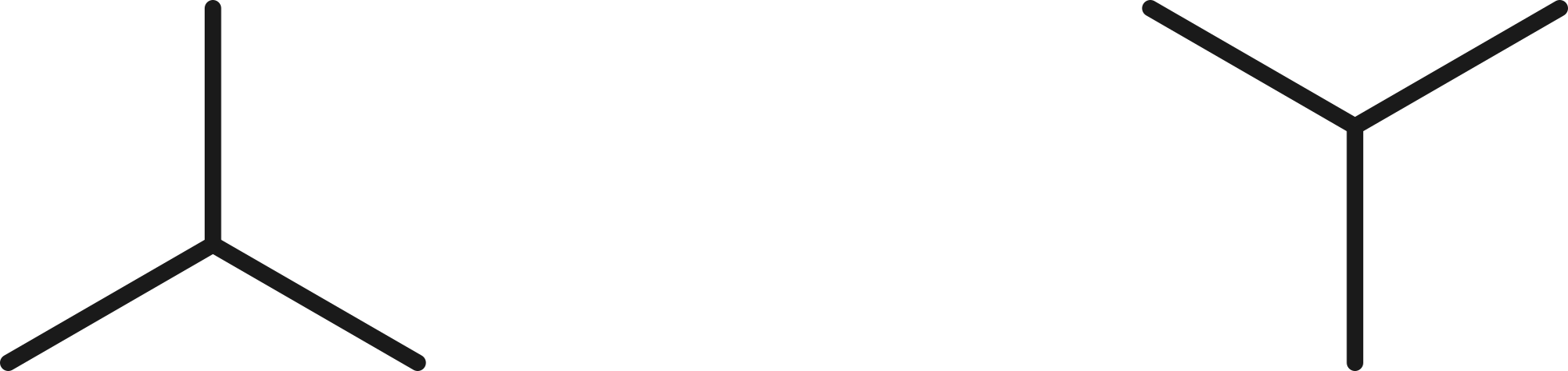}
\caption{A split (left) and a merge (right).}\label{fig: merge and split}
\end{center}
\end{figure}
Isotopic strand diagrams are considered to be equal. We denote the collection of $(m,n)-$strand diagrams as $\mathcal{D}^{m}_{n}$. 

Strand diagrams can be reduced using two local moves, pictured in Figure \ref{fig: moves}. Cancelling carets can be seen as Type I moves in $\mathcal{D}^{1}_{1}$.
\begin{figure}[ht]
\begin{center}
\includegraphics{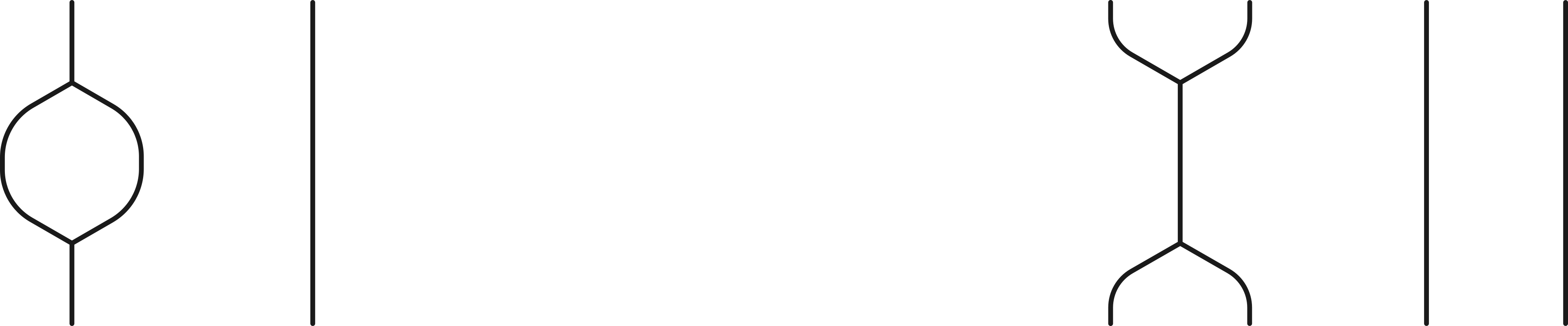}\put(-248,25){$\longrightarrow$}\put(-52,25){$\longrightarrow$}\caption{Cancellation moves of type I (left) and Type II (right). Type I moves are applied when a split contains a merge directly below it, and the two share a pair of edges. Type II moves are applied when a merge occurs directly above a split, and the two share one edge.}\label{fig: moves}
\end{center}
\end{figure}

Strand diagrams related by a finite sequence of these moves and their inverses are called equivalent. We let $\overline{\mathcal{D}^{m}_{n}}$ refer to equivalence classes of $(m,n)-$strand diagrams.

We call a strand diagram \textit{reduced} if it is not subject to any further reductions, and we denote by $\mathcal{R}^{m}_{n}$ the set of reduced $(m,n)$-strand diagrams. Every $(m,n)$-strand diagram is equivalent to a unique reduced $(m,n)$-strand diagram, therefore, the reduction map $\rho\colon \overline{\mathcal{D}^{m}_{n}} \to \mathcal{R}^{m}_{n}$ is a bijection.

Given $\Gamma_{1} \in \mathcal{D}^{k}_{m}$ and $\Gamma_{2} \in \mathcal{D}^{m}_{n}$, we denote $\Gamma_{2} \circ \Gamma_{1} \in \mathcal{D}^{k}_{n}$ to be the result of placing $\Gamma_{2}$ below $\Gamma_{1}$ and joining them at their common endpoints. This vertical stacking induces a well-defined binary operation on $\overline{\mathcal{D}^n_n}$, from which we get a group structure. The identity element in $\overline{\mathcal{D}^n_n}$ is $[{\rm Id}_n]$, the class of the trivial $(n, n)$-strand diagram, which is $n$ vertical strands. The inverse of any $[\Gamma] \in \overline{\mathcal{D}^n_n}$ is $[\Gamma^*]$, where $\Gamma^*$ is the vertical reflection of $\Gamma$. Given $\Gamma_{1} \in \mathcal{R}^{k}_{m}, \Gamma_{2} \in \mathcal{R}^{m}_{n}$ we let $\Gamma_{2} * \Gamma_{1} \in \mathcal{R}^{k}_{n}$ denote the result of vertically stacking and then reducing.
Observe that $\mathcal{R}^{n}_{n}$ is a group with multiplication $*$. It can also be considered as a subset of $\mathcal{D}^{n}_{n}$, but it is not a subgroup as $\mathcal{R}^{n}_{n}$ is not closed with respect to the $\circ$ operation.

 Recall that elements of Thompson's group $F$ are in bijection with reduced pairs of planar rooted binary trees, which can now be recognized as elements of $\mathcal{R}^{1}_{1}$. 
 
 \begin{defn}\label{def:delta}Let $\delta\colon F \to \mathcal{R}^{1}_{1}$ be the map sending a function in the Thompson group to its reduced pair of trees. By \cite[Prop $2.5$]{belk_matucci}, $\delta$ is a group isomorphism.\end{defn}

More generally, $F \cong \mathcal{R}^{n}_{n}$ for any $n \in \mathbb{N}$, via isomorphisms $ \mathcal{R}^1_1 \to \mathcal{R}^n_n$ given in \cite{matucci}. For a $(1, 1)$-reduced strand diagram $\Gamma$, this isomorphism sends $\Gamma$ to $v_n * \Gamma * v_n^*$, where $v_n$ is the ``right vine" with $n$ leaves (see Figure \ref{fig: right vine}). This map represents conjugation by $v_{n}$, and has an inverse map which is conjugation by $v_{n}^{*}.$ More generally, this conjugation map gives an isomoprhism if we replace $v_{n}$ with any element of $\mathcal{R}^{1}_{n}.$ Specifically, this paper will focus on isomorphisms that use symmetric trees $S_{k} \in \mathcal{R}^{1}_{2^{k}}$ (see Figure \ref{fig: S3}).

\begin{figure}[ht]
\centering
\includegraphics[scale=2]{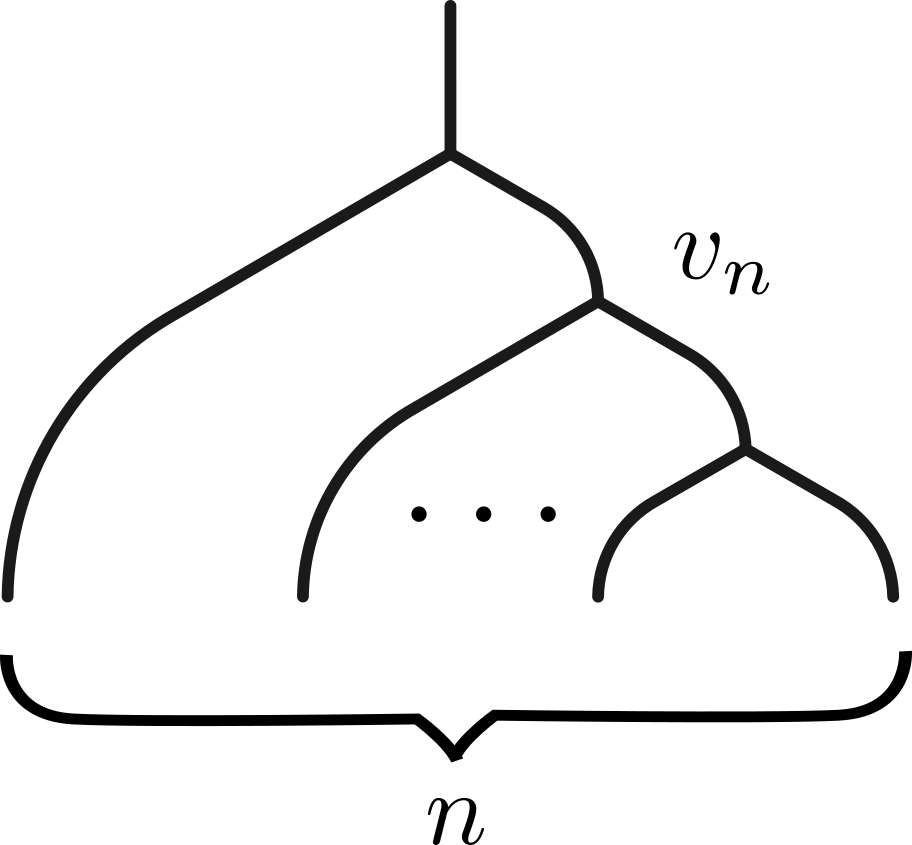}
\caption{The right vine $v_{n}$}\label{fig: right vine}
\end{figure}

\begin{defn}\label{def: gammakg}
Let $\Theta_{k}\colon F\to  \mathcal{R}^{2^{k}}_{2^{k}}$ be the isomorphism resulting from conjugating by the symmetric tree with $2^k$ leaves, i.e. $\Theta_{k}(g)=S_{k}*\delta(g)*S_{k}^{*}.$  Observe $\Theta_{0}=\delta$ as in Definition \ref{def:delta}
\end{defn}

\begin{figure}[ht]
\begin{center}
    \includegraphics[scale=1.1]{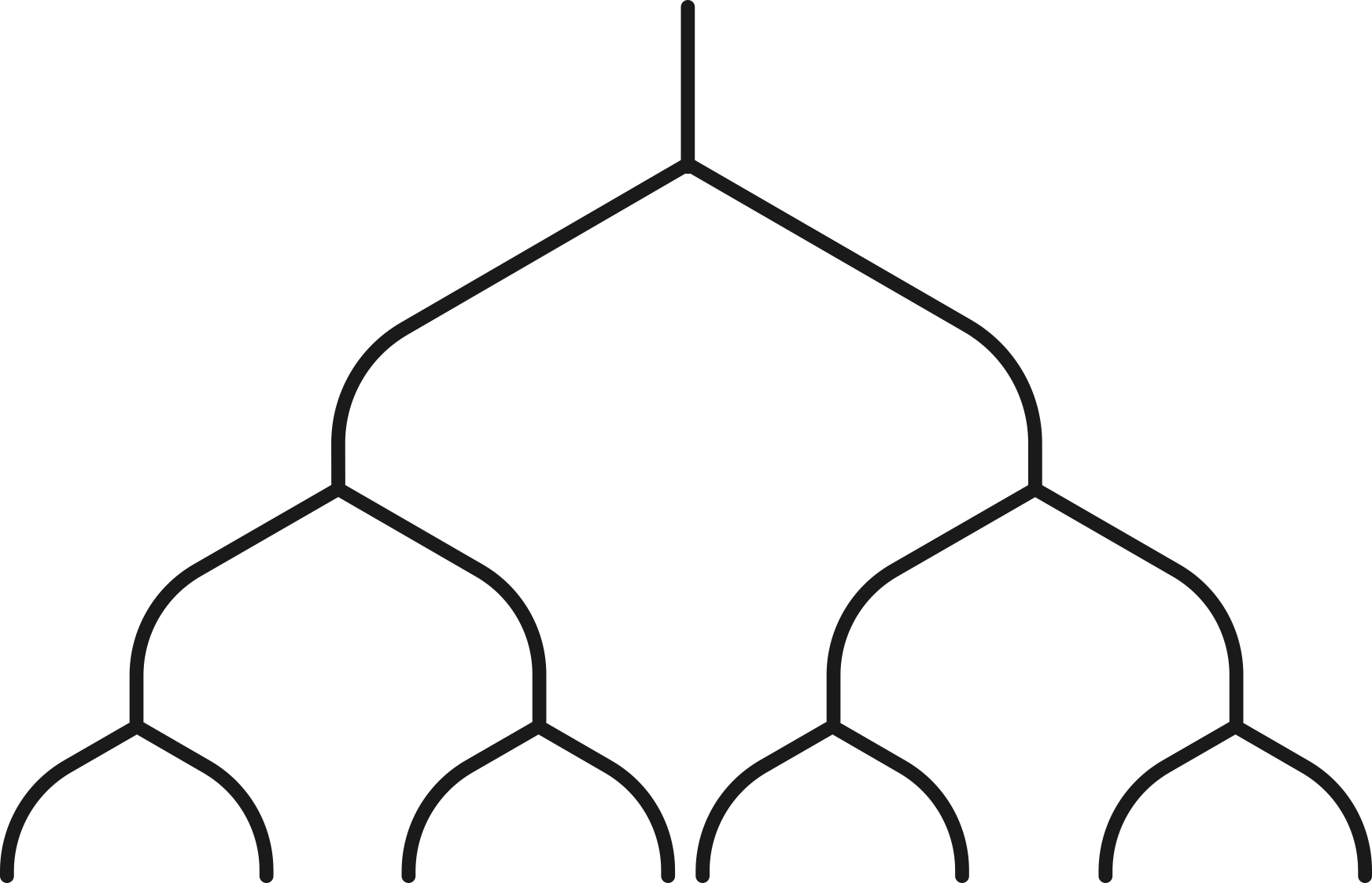}
    \caption{A symmetric tree $S_{3}$, with $8$ leaves}\label{fig: S3}
\end{center}
\end{figure}
\section{From elements of Thompson's groups $F$ to tangles}\label{sec: tangles}

The aim of this section is to introduce the method by which we construct $(2^{k+1},2^{k+1})$-tangles from elements $g \in F$, and to prove that this construction is asymptotically faithful. Specifically, we will define a functor $T$ sending strand diagrams to unoriented tangles, as a generalization of Jones' functor from forests to tangles introduced in \cite{jones14}. A forest is a strand diagram with only splits. We follow the convention of Jones in \cite{jones14} that forests grow downward, see Figure \ref{fig: forest ex}. Forests growing upward will be referred to as \textit{inverse forests}. 
By specifying a functor from strand diagrams to tangles, each $g \in F$ will have an associated $(2^{k+1},2^{k+1})$ tangle $T(\Theta_{k}(g))$, where $\Theta_{k}$ was given in Definition \ref{def: gammakg}. 

\begin{center}
\begin{figure}[ht]
\includegraphics[scale=1.2]{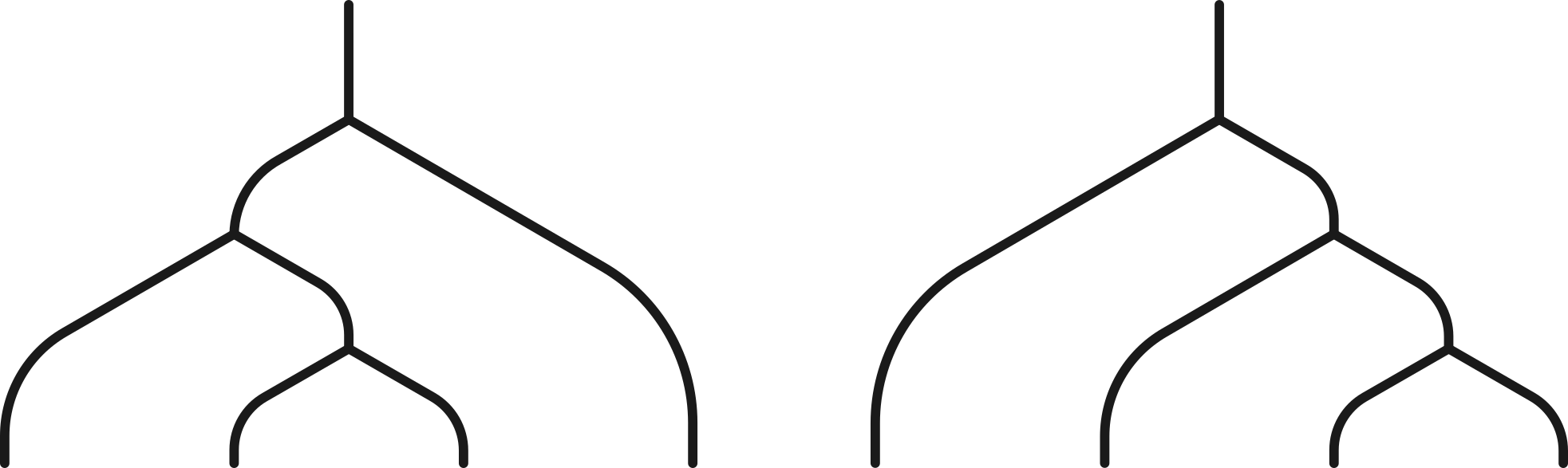}\caption{An example of a forest $D \in \mathcal{D}^{2}_{8}$.}\label{fig: forest ex}
\end{figure}
\end{center}

\begin{defn}
    Let $\mathfrak{D}$ be the category whose objects are positive integers, and the morphisms from $m$ to $n$ are $(m, n)$-strand diagrams. Compositions are concatenations of strand diagrams.
	
    Similarly, let $\mathfrak{T}$ be the category whose objects are positive integers, and the morphisms from $m$ to $n$ are unoriented $(2m,2n)$-tangles. Compositions are concatenations of tangles.
\end{defn}

Now we define a functor $T\colon \mathfrak{D} \to \mathfrak{T}$. First, $T$ acts by the identity on objects. Given an $(m, n)$-strand diagram $\Gamma$ embedded in $[0, 1] \times [0, 1]$, we construct an $(2m, 2n)$-tangle $T(\Gamma)$ as follows.

Observe that the complement of $\Gamma$ is a disjoint union of some polygons $P_1, P_2,..., P_N$. Assume $P_i$ is neither the leftmost region nor the rightmost region, then the boundary of $P_i$ consists of:

\begin{enumerate}
    \item only some edges of $\Gamma$, or
    \item some edges of $\Gamma$ and an interval on the lower side, or
    \item some edges of $\Gamma$ and an interval on the upper side, or
    \item some edges of $\Gamma$, an interval on the lower side, and an inteval on the upper side
\end{enumerate}

See Figure \ref{fig: strand diagram to tangle example} where the regions are labeled 1 -- 4, according to the definition above.

\begin{prop}
    If $P_i$ has boundary of type (1), then $P_i$ has a unique maximum point at a split, and a unique minimum point at a merge. If $P_i$ has boundary of type (2), then $P_i$ has a unique maximum point at a split. If $P_i$ has boundary of type (3), then $P_i$ has a unique minimum point at a merge.
\end{prop}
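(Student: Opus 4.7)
The plan is to combine a local analysis at each vertex of $\partial P_i$ with a global connectedness argument. First, the local analysis: since no edge of $\Gamma$ is horizontal, $y$ is strictly monotone on each edge of $\Gamma$, so any local extremum of $y|_{\partial P_i}$ must occur at a vertex of $\Gamma$ or on a horizontal interval of the square's boundary. A three-sector case check at an interior vertex $v$ shows that a local maximum of $y|_{\partial P_i}$ can occur only at a split with $P_i$ in its lower sector (both boundary edges of $P_i$ at $v$ leave $v$ going downward), and a local minimum only at a merge with $P_i$ in its upper sector. This already handles the ``at a split'' or ``at a merge'' part of the conclusion.

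For uniqueness, I would sweep horizontal lines $\ell_c = [0,1]\times\{c\}$ from $c=1$ down to $c=0$ and track the intervals of $P_i \cap \ell_c$. The number of these intervals is preserved except at critical heights $c = y(v)$ for $v \in \partial P_i$: it increases by $1$ at a split with $P_i$ in the lower sector (a new wedge opens just below $v$), decreases by $1$ at a merge with $P_i$ in the upper sector (a wedge closes at $v$), and is unchanged at side-sector vertices, which only reshape an existing interval. Declare two intervals at different heights to belong to the same \emph{tube} if they are joined by a continuous family of intervals across reshape events only, not across births or deaths. Each tube is a 2D subregion of $P_i$ that is both open in $P_i$ (by product neighborhoods at non-critical heights, and near a birth/death vertex by the fact that on one side of the critical height the face is locally empty) and closed in $P_i$ (its complement is the union of the remaining, open, tubes).

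The planar graph $\Gamma \cup \partial([0,1]^2)$ is connected: the $y$-maximal vertex of any component of $\Gamma$ must have all incident edges going down, so it is a top univalent vertex on the square, and symmetrically at the bottom. Hence every face $P_i$ is simply connected, in particular connected. Writing $P_i$ as the disjoint union of its nonempty, open-and-closed tubes then forces exactly one tube, and consequently each horizontal slice of $P_i$ is a single interval. This unique tube has exactly one birth event (a split in the lower sector, or a top boundary segment of the square) and exactly one death event (a merge in the upper sector, or a bottom boundary segment), yielding the three conclusions of the proposition in its three cases.

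The main obstacle I anticipate is the bookkeeping in the tube definition: one must verify that reshape events---passage through a side-sector split or merge, or past the top/bottom endpoints of the square---really do keep intervals in a single-valued continuous correspondence, so that two distinct tubes of the same face cannot be secretly fused. Once this local verification is in place, the open-and-closed observation makes the connectedness argument essentially automatic.
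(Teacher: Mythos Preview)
Your proof is correct and follows essentially the same strategy as the paper's: a height sweep establishing that the only critical events for $P_i$ are births at splits (lower sector) and deaths at merges (upper sector), with side-sector vertices merely reshaping intervals, after which connectedness of $P_i$ forces a single birth and a single death. Your tubes are precisely the paper's $0$-handles, and your ``no fusing of intervals'' is exactly the paper's ``no extrema of the second type'' (equivalently, no $1$-handle attachments); you also supply the justification that $P_i$ is simply connected, which the paper asserts without proof.
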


\begin{proof}
We give a proof for the regions of type (1); the argument for the other two cases is directly analogous. The given polygonal region $P_i$ is topologically a disk. A priori, there are two types of minima and maxima of the {\em boundary} $\partial D$ of a planar region $D$. This first type is where the minimum of the boundary is also a minimum of the region $D$, and similarly a maximum of $\partial D$ is also a maximum of $D$. The second type is where a minimum, respectively maximum of $\partial D$ is a maximum, respectively minimum of $D$. Since the edges of a strand diagram have non-zero slopes (see Definition \ref{def: strand diagram}), the boundary of $P_i$ does not have extrema of the second type. The extrema of the first kind take place precisely at merges and splits, as shown in Figure \ref{fig: merge and split region}.
    \begin{figure}[ht]
\begin{center}
\includegraphics{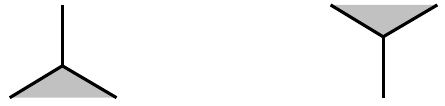}
\scriptsize
\put(-189,6){$P_i$}
\put(-34,40){$P_i$}
\caption{A maximum and a minimum of $P_i$.}\label{fig: merge and split region}
\end{center}
\end{figure}
Suppose $P_i$ has more than one local minimum.
It follows from ambient Morse theory (or in this case, also from planar topology) that, looking at sublevel sets of the height function, the minima of the first kind give rise to $0$-handles of $P_i$. Since $P_i$ is connected, these $0$-handles eventually must be connected by $1$-handles. Attaching a $1$-handle to $P_i$ correspond to a maximum of $\partial P_i$ of the second type. Since this is impossible in a strand diagram, $P_i$ has a unique minimum. The same argument shows that there is also a unique maximum. 
\end{proof}

Consider each polygon $P_i$ which is not leftmost or  rightmost. If $P_i$ has boundary of type (1), we introduce an edge connecting the unique maximum point and the unique minimum point of $P_i$. If $P_i$ has boundary of type (2), we  add an edge connecting the unique maximum point of $P_i$ to a point in the interior of the interval in $\partial P_i$ on the bottom of the square. The definition for type (3) is analogous. If $P_i$ has boundary of type (4), we connect a point in the interior of the top interval to a point in the interior on the bottom. Let $\Gamma'$ denote the resulting graph. The new edges are drawn red in Figure \ref{fig: 4-valent vertices to crossings}.

Note that $\Gamma'$ is a graph with $(2m-1)$ univalent vertices on the top side of the square, $(2n - 1)$ univalent vertices on the bottom side, and all other vertices are $4$-valent, containing either a split or a merge of the original strand diagram. We replace each 4-valent vertex by a crossing as in Figure \ref{fig: 4-valent vertices to crossings}. Finally, we add a trivial strand on the leftmost side to get an unoriented $(2m, 2n)$-tangle $T(\Gamma)$. See Figure \ref{fig: strand diagram to tangle example} for an example.

\begin{figure}[ht]
    \centering
    \includegraphics{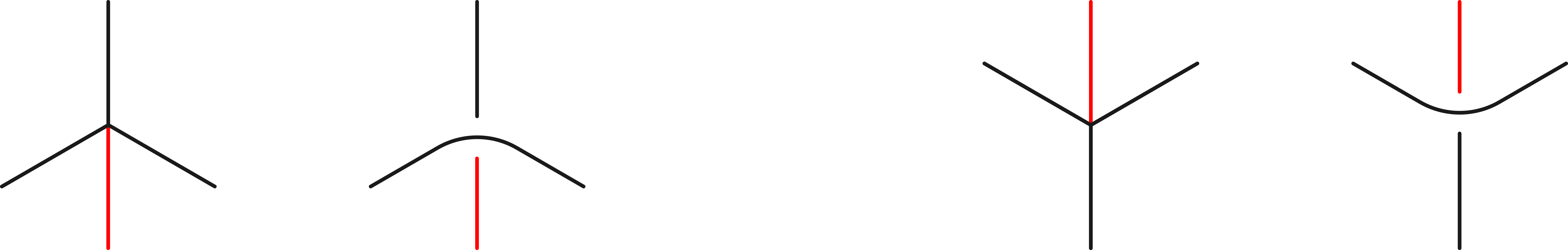}
    \put(-310,30){$\longrightarrow$}
    \put(-370,40){split}
    \put(-75,20){$\longrightarrow$}
    \put(-155,15){merge}
    \caption{The local replacement at $4$-valent vertices of $\Gamma'$, with added edges in $\Gamma'$ marked in red.}
    \label{fig: 4-valent vertices to crossings}
\end{figure}

\begin{figure}[ht]
    \centering
    \includegraphics{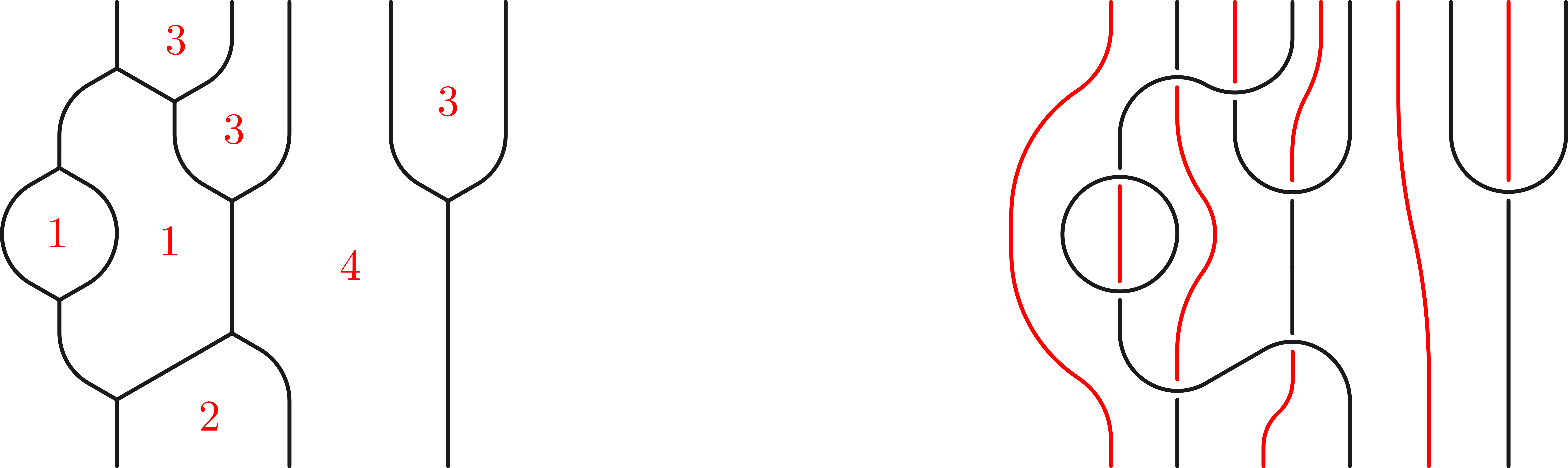}
    \put(-280,-20){$\Gamma$}
    \put(-70, -20){$T(\Gamma)$}
    \caption{A $(5, 3)$-strand diagram $\Gamma$ and its associated $(10, 6)$-tangle $T(\Gamma)$. Each polygon $P$ in the complement of $\Gamma$ (except the leftmost one and the rightmost one) is marked by a number $k$, meaning that $P$ has boundary of type $(k)$.}
    \label{fig: strand diagram to tangle example}
\end{figure}

If $E$ is a forest, then the complement of $E$ does not contain any polygons with boundary of type (1) and (3). In the definition of $T(E)$ above, we just need to connect each split with the interval on the bottom of the square, and we add a strand connecting the top and bottom intervals of each type (4) polygon except the rightmost one. Each split in $E$ gives rise  to an arc connecting two points on the bottom edge of the square in $T(E)$, and we refer to these arcs as \textit{turnbacks}. Jones also constructed tangles from forests in \cite{jones14}; comparing $T(E)$ with Jones' functor, the only difference is that in our construction, $T(E)$ has an extra trivial strand on the left. For an example, see Figure \ref{fig: forest tangle ex}. 
\begin{center}
\begin{figure}[ht]
\includegraphics[scale=1.2]{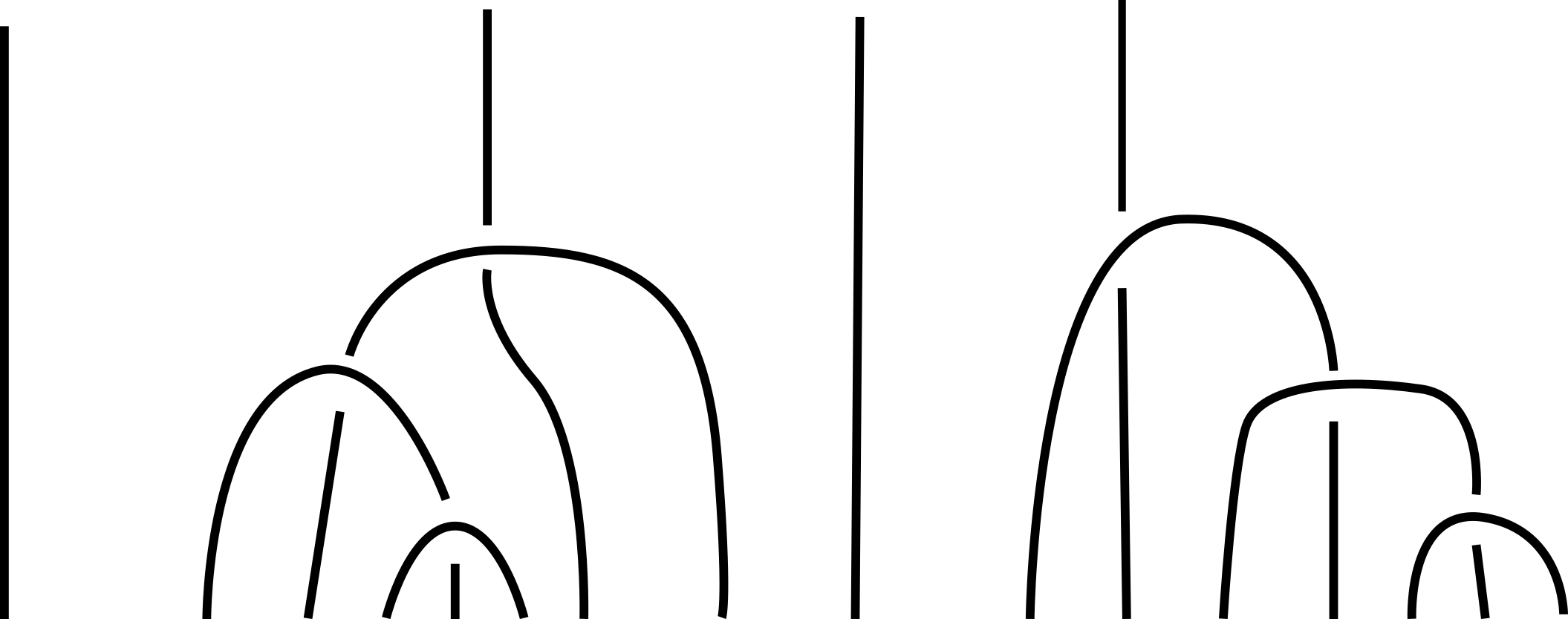}\caption{The tangle $T(D)$, with $D$ as in Figure \ref{fig: forest ex}.}\label{fig: forest tangle ex}
\end{figure}
\end{center}

\begin{defn}\label{def: biforest}
    A strand diagram $\Gamma$ is called a {\em biforest} if $\Gamma = F_1 \circ F_2^{*}$ for some forests $F_1$ and $F_2$.
\end{defn} 

\begin{figure}[ht]
    \centering
    \includegraphics{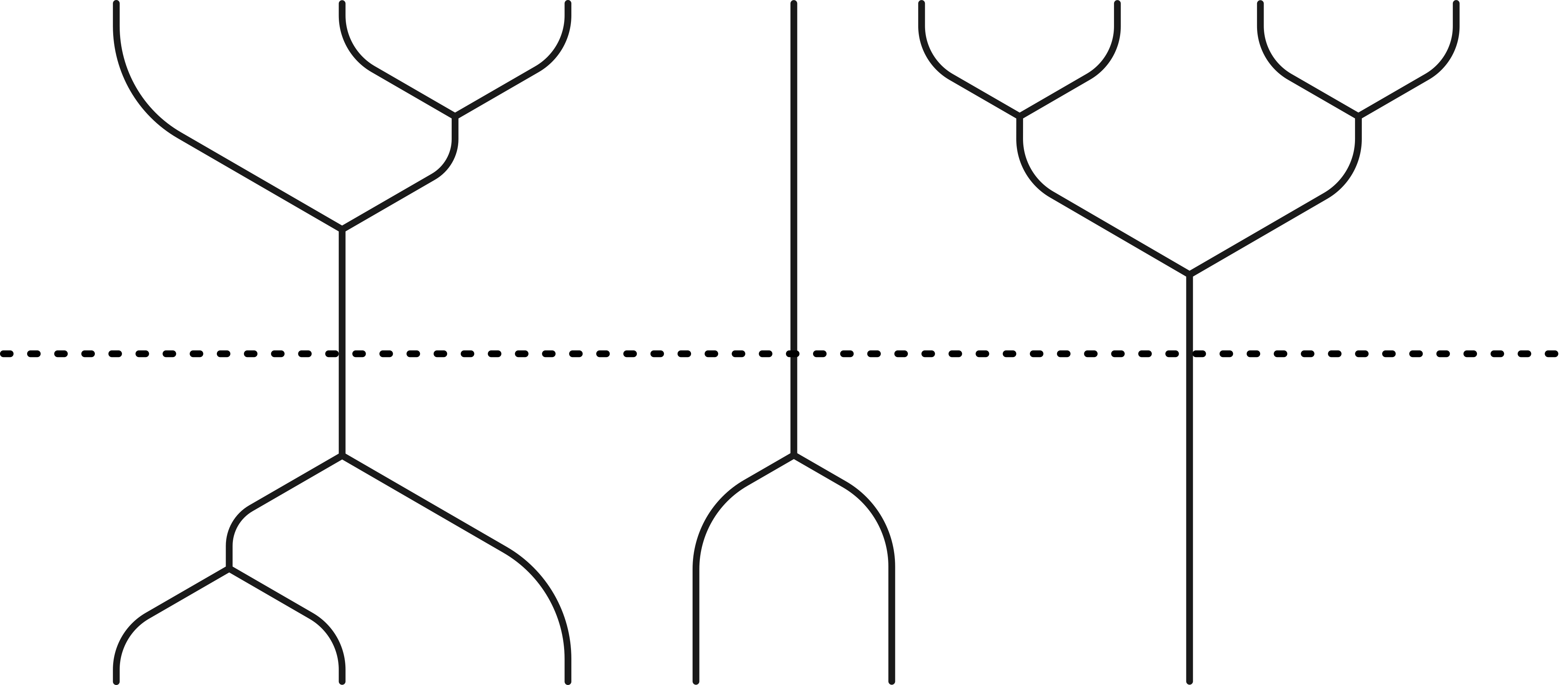}
    \put(-210,60){$F_{2}^{*}$}
    \put(-210,20){$F_{1}$}
    \caption{A biforest in $\mathcal{D}^{8}_{6}$}
    \label{fig: biforest example}
\end{figure}

\begin{prop}\label{prop: biforest}
    Given two biforests $\Gamma$ and $\Gamma'$ with the same number of endpoints, $\Gamma \neq \Gamma'$ implies that $T(\Gamma) \not\cong T(\Gamma')$. 
\end{prop}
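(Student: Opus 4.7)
The plan is to use functoriality of $T$ together with the crossingless structure of forest tangles to reduce the claim to an injectivity statement about forests alone. Writing $\Gamma = F_1 \circ F_2^*$ and applying functoriality yields $T(\Gamma) = T(F_1) \circ T(F_2^*)$. Since $F_1$ is a forest, $T(F_1)$ (Jones' construction together with the added leftmost trivial strand) consists only of turnbacks along the bottom edge and top-to-bottom through-strands, and so is crossingless; dually, $T(F_2^*)$ is crossingless with only top turnbacks and through-strands. Stacking two crossingless planar tangles produces a crossingless planar tangle, so $T(\Gamma)$ is planar, with all top turnbacks contributed by $F_2^*$ and all bottom turnbacks contributed by $F_1$.

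Planar tangles are determined up to isotopy by the matching of their boundary endpoints. Hence if $T(\Gamma) \cong T(\Gamma')$, then the top turnback matching, the bottom turnback matching, and the top-to-bottom through-strand matching all agree between $\Gamma$ and $\Gamma'$.

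The remaining step is to recover $F_1$ from the bottom turnback-plus-through-strand data of $T(F_1)$, and analogously $F_2$. I would argue this by induction on the number of splits. An innermost bottom turnback in $T(F_1)$ (one enclosing no other turnback) must come from a split whose two outgoing edges are both leaves of $F_1$, and these leaves occupy adjacent bottom positions. Excising that split produces a smaller forest whose tangle is obtained from $T(F_1)$ by deleting the innermost turnback and reindexing the through-strands. The inductive hypothesis identifies this smaller forest uniquely, and the position of the deleted turnback specifies where the split must be reinserted to recover $F_1$. Running the same argument dually on the top turnbacks reconstructs $F_2$, and hence $\Gamma = \Gamma'$.

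The main obstacle is the inductive correspondence between innermost turnbacks and leaf-adjacent splits, along with the accompanying bookkeeping of through-strand positions. The distinguished leftmost trivial strand added by $T$ plays a key role here, as it breaks the cyclic symmetry inherent in non-crossing matchings and anchors a canonical linear reading of the forest structure from the planar tangle data. Once this bookkeeping is in place, the induction is routine and the claim follows.
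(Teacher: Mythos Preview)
Your central claim---that $T(F_1)$ is crossingless for a forest $F_1$---is false, and this breaks the rest of the argument. At a split the four incident edges of $\Gamma'$ appear in cyclic order as: incoming strand, right branch, added edge, left branch. The only pairing that produces a transversal crossing (as in Figure~\ref{fig: 4-valent vertices to crossings}) joins the incoming strand to the added edge and joins the two branches by a cap. Already for a single caret this gives a $(2,4)$-tangle whose through-strand from the root ends at the bottom point lying strictly between the cap's two feet, so the cap and the through-strand are linked and the crossing is essential. For a larger tree the bottom turnback pattern is not even a non-crossing matching: for the tree with leaves $((1,2),3)$ the two caps connect bottom positions $(2,4)$ and $(3,6)$, which interleave. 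Hence your innermost-turnback induction cannot begin. A global sanity check: if forest tangles were crossingless, every biforest tangle---in particular every $T(\delta(g))$ for $g\in F$---would be crossingless, and Jones' construction would produce only unlinks, contradicting the Alexander-type theorem cited in the introduction. The presence of the Reidemeister~II move in the Type~II movie of Figure~\ref{fig: positive type II movie} is another indication that these tangles carry essential crossings.

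The paper's proof sidesteps this by using only the connectivity of the tangle, which is an isotopy invariant regardless of crossings. Because $T(F_2^*)$ contributes no bottom turnbacks, the bottom turnbacks of $T(\Gamma)$ coincide with those of $T(F_1)$; and these determine $F_1$, since the cap at each split joins precisely (the representatives of) its two children---the leaf position if the child is a leaf, and the added-edge endpoint below it if the child is itself a split. Thus $F_1\neq G_1$ forces different bottom turnback patterns and hence $T(\Gamma)\not\cong T(\Gamma')$. Your argument can be repaired along these lines by replacing crossinglessness with connectivity and using this parent--child encoding for the reconstruction, in place of the innermost-cap induction.
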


Here tangles are considered equivalent, $T(\Gamma) \cong T(\Gamma')$, if they are related by an isotopy fixing the endpoints.

\begin{proof}
    Let $\Gamma=F_1 \circ F_2^*$, $\Gamma'=G_1\circ G_2^*$. Since $\Gamma\neq \Gamma'$, $F_i\neq G_i$ for some $i=1,2$. Without loss of generality, suppose $F_1\neq G_1$. Then $T(\Gamma)$ and $T(\Gamma')$ differ by at least one turnback near $[0,1]\times \{ 0\}$.
\end{proof}

\begin{rem}Proposition \ref{prop: biforest} does not hold for arbitrary strand diagrams. For example, an infinite family of $(1,1)$-strand diagrams, coming from an infinite family of distinct Thompson group elements given by the ordered pairs $(\{[0,\frac{1}{2}],[\frac{1}{2},\frac{3}{4}],[\frac{3}{4},\frac{7}{8}],\dots\},\{\dots, [\frac{1}{8},\frac{1}{4}],[\frac{1}{4},\frac{1}{2}],[\frac{1}{2},1]\})$, produce the trivial tangle with two vertical strands. Similarly, it is known that Jones' construction in \cite{jones14} produces the unlink from many distinct elements of $F$. 
\end{rem}

\subsection{The statement and the proof of faithfulness} \label{sec: faithful}

In this section we prove asymptotic faithfulness of the construction of tangles from $F$. More precisely, we show that given any $g \neq h \in F$, there exists some $K$ such that for $k \geq K$, $T(\Theta_{k}(g)) \not\simeq T(\Theta_{k}(h))$. This statement highlights a key difference between the creation of tangles from $F$ in this paper, and that of links in \cite{jones14}. In general, the construction of links from $F$ is not faithful. 

In this section we refer to the \textit{maximum height} of a tree $S$ to be the maximum distance from a leaf in $S$ to the root. For example, the maximum height of $S_{k}$ is $k$. 
\begin{lem}\label{lemma: difference}
    Let $\Gamma \in \mathcal{R}_{n}^{1}$, i.e. $\Gamma$ is a binary tree with $n$ leaves. If $d$ is the maximum height of $\Gamma$ and $k\geq d$, then $\Gamma * S_{k}^{*}\in \mathcal{R}^{2^{k}}_{n}$ contains only merges.
\end{lem}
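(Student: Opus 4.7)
The plan is to induct on the maximum height $d$ of $\Gamma$. The base case $d = 0$ is immediate: $\Gamma$ has a single leaf and consists of one edge, so the stacking gives $\Gamma * S_k^* = S_k^*$, which is built entirely of merges.

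For the inductive step with $d \geq 1$, I would decompose both factors at their root. Reading from top to bottom, $\Gamma$ begins with a split whose two children are the roots of subtrees $\Gamma_L, \Gamma_R$, each of height at most $d - 1$. Symmetrically, $S_k^*$ decomposes (reading top to bottom) as two side-by-side copies of $S_{k-1}^*$ whose bottom endpoints are joined by a single merge at the very bottom. Stacking $S_k^*$ on top of $\Gamma$ thus places that bottom merge of $S_k^*$ directly above the top split of $\Gamma$, sharing exactly the one middle edge between them. This is precisely the configuration to which a Type II cancellation applies.

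After performing the Type II move, the resulting diagram splits into two side-by-side pieces with no interaction: on the left, $S_{k-1}^*$ stacked on $\Gamma_L$, and on the right, $S_{k-1}^*$ stacked on $\Gamma_R$. Since $k - 1 \geq d - 1 \geq \max(\mathrm{height}(\Gamma_L), \mathrm{height}(\Gamma_R))$, the inductive hypothesis applies to each piece and produces reduced diagrams $S_{k-1}^* * \Gamma_L$ and $S_{k-1}^* * \Gamma_R$ that consist only of merges. Placing them side by side gives a diagram with no splits, which is automatically fully reduced, since both Type I and Type II moves require at least one split. Hence $\Gamma * S_k^*$ consists only of merges.

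The main point to pin down is that a single Type II cancellation at the interface decomposes the problem cleanly into two independent halves. This hinges on the symmetric structure of $S_k^*$: its two upper ``halves'' sit in disjoint vertical strips of the square, so the reductions on the two sides do not interact. Once this observation is in hand, the induction runs directly.
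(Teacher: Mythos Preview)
Your argument is correct. One notational slip: where you write ``$S_{k-1}^* * \Gamma_L$'' and ``$S_{k-1}^* * \Gamma_R$'' you mean $\Gamma_L * S_{k-1}^*$ and $\Gamma_R * S_{k-1}^*$ (in the paper's convention $\Gamma_2 * \Gamma_1$ places $\Gamma_2$ \emph{below} $\Gamma_1$, and the other composite is not even defined for these shapes).

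The paper takes a different, non-inductive route. It observes in one stroke that since $k \geq d$, the tree $\Gamma$ is a subtree of the full symmetric tree $S_k$; hence every split of $\Gamma$ corresponds to a merge of $S_k^*$, and in $\Gamma \circ S_k^*$ each such pair cancels via a Type~II move, leaving only the ``excess'' merges of $S_k^*$. Your induction is essentially this same cancellation carried out one level at a time: the single Type~II move at the root peels off the top layer, and the inductive hypothesis handles the rest. The paper's version is shorter and highlights the structural reason (the subtree inclusion $\Gamma \hookrightarrow S_k$), while your version is more explicit about why the resulting diagram is already reduced---you spell out that a merge-only diagram admits no further Type~I or Type~II moves, a point the paper leaves implicit.
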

Imprecisely, the resulting strand diagram $\Gamma * S_{k}^{*}\in \mathcal{R}^{2^{k}}_{n}$ may be thought of as encoding the ``difference" between $S_{k}$ and $\Gamma$.
\begin{proof}
    It follows from the fact that $\Gamma$ is a sub-tree of $S_k$ that $S_{k}$ contains every split of $\Gamma$ (and sometimes more). In the concatenated but unreduced picture $\Gamma \circ S_{k}^{*}$, every split in $T$ and its corresponding merge in $S_{k}^{*}$ can be eliminated by a reduction of Type II. The resulting reduced diagram only has merges, given by merges in $S_{k}^{*}$ that do not correspond to splits in~$T$. 
\end{proof}
\begin{figure}[ht]
\centering
\includegraphics{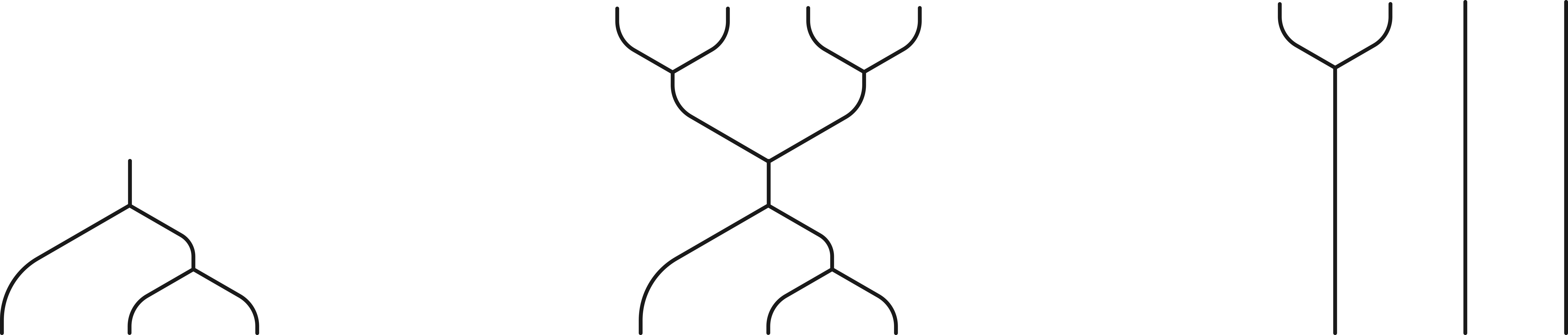}\put(-325, 80){$\Gamma$}\put(-192,80){$\Gamma \circ S_{2}^{*}$}\put(-37,80){$\Gamma*S_{2}^{*}$}
\caption{The tree $S_{2}$ has one more split than the tree $\Gamma$. When $\Gamma \circ S_{2}^{*}$ is reduced to $\Gamma * S_{2}^{*}$, the single merge remaining corresponds to this split.}
\end{figure}

The following result is a more detailed version of Theorem \ref{thm: intro aymptotic faithfulness} in the Introduction. To relate the notation in the two statements, $n=2^k$ and $T_n=T\circ \Theta_k$.

\begin{thm} \label{thm: aymptotic faithfulness for unoriented F}
    Suppose $g, h \in F, g\neq h$. Let $m=\max({\text{height}(g),\text{height}(h)})$. For $k$ such that $2^{k} \geq m$, we have  $T(\Theta_{k}(g))\not\cong T(\Theta_{k}(h))$.
\end{thm}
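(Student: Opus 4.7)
The plan is to show that for $k$ large enough $\Theta_k(g)$ and $\Theta_k(h)$ are \emph{distinct biforests}, and then to invoke Proposition \ref{prop: biforest}. Write $\delta(g)$ in the form $R_g * L_g$, where $L_g \in \mathcal{R}^{1}_{l}$ is the top splits-tree and $R_g \in \mathcal{R}^{l}_{1}$ is the bottom merges-tree of the reduced pair of trees for $g$. By the definition of $\Theta_k$, stacking from top to bottom the diagram $\Theta_k(g) = S_k * \delta(g) * S_k^{*}$ consists of the four blocks $S_k^{*},\ L_g,\ R_g,\ S_k$.

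First, I would reduce the top half $L_g * S_k^{*}$. Because $L_g$ is a binary tree whose maximum height is controlled by $m$, and $k$ is chosen large enough relative to $m$, Lemma \ref{lemma: difference} applies and produces an element of $\mathcal{R}^{2^k}_{l}$ containing only merges: an inverse forest $F_2^{*}$. Applying the same lemma (after vertical reflection, using that the reduction moves are symmetric under flipping the square upside down) to the bottom half $S_k * R_g$ yields a forest $F_1 \in \mathcal{R}^{l}_{2^k}$ containing only splits. Composing, I obtain
\[
\Theta_k(g) \;=\; F_1 \circ F_2^{*},
\]
which is precisely the biforest form of Definition \ref{def: biforest}. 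The same argument, applied to $h$, shows $\Theta_k(h) = G_1 \circ G_2^{*}$ is a biforest as well, with the same number of top and bottom endpoints $2^k$.

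Now I would finish using Proposition \ref{prop: biforest}. Because $\Theta_k \colon F \to \mathcal{R}^{2^k}_{2^k}$ is a group isomorphism (Definition \ref{def: gammakg}), the hypothesis $g \neq h$ gives $\Theta_k(g) \neq \Theta_k(h)$ as reduced strand diagrams. Both reduced diagrams are biforests in $\mathcal{R}^{2^k}_{2^k}$, so Proposition \ref{prop: biforest} directly yields $T(\Theta_k(g)) \not\cong T(\Theta_k(h))$.

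I expect the main care to be bookkeeping rather than any deep new input: verifying that each half of the concatenation reduces to the claimed forest under the appropriate orientation convention, and checking that the quantitative condition on $k$ in the theorem's hypothesis is strong enough to trigger Lemma \ref{lemma: difference} on both halves simultaneously (for $L_g$, $R_g$ coming from $g$ and the analogous trees coming from $h$). After that, the conclusion is a formal consequence of the biforest proposition together with the fact that $\Theta_k$ is an isomorphism.
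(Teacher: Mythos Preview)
Your approach is exactly the paper's: use Lemma~\ref{lemma: difference} on each half to exhibit $\Theta_k(g)$ and $\Theta_k(h)$ as biforests, note that $\Theta_k$ is an isomorphism so the two are distinct, and conclude via Proposition~\ref{prop: biforest}.

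There is one small slip worth fixing. You write $\Theta_k(g) = F_1 \circ F_2^{*}$, but this equality is generally false: $\Theta_k(g)$ is by definition \emph{reduced}, whereas the concatenation $F_1 \circ F_2^{*}$ typically admits Type~II moves at the interface (whenever a leaf of $L_g$ and the corresponding leaf of $R_g^{*}$ both sit at depth strictly less than $k$, the adjacent merge of $F_2^{*}$ meets a split of $F_1$). What you actually obtain is $\Theta_k(g) = F_1 * F_2^{*}$, the full reduction of $F_1 \circ F_2^{*}$. To finish you need the extra (easy) observation that reducing a biforest yields a biforest: Type~I moves never apply, and a Type~II move simply deletes one merge from the top inverse forest and one split from the bottom forest. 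The paper makes exactly this point; once you insert it, your argument matches the paper's line for line.
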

\begin{proof} Let $\delta(g)=T_{2}^{*}*T_{1}$, and $\delta(h)=U_{2}^{*}*U_{1}$, where $(T_{1},T_{2})$ and $(U_{1},U_{2})$ both are reduced as pairs of trees. Note that \[\Theta_{k}(g)=S_{k}*T_{2}^{*}*T_{1}*S_{k}^{*}=(S_{k} *T_{2}^{*})*(S_{k} * T_{1}^{*})^{*}\] By Lemma \ref{lemma: difference} we know that $S_{k} *T_{2}^{*}$ and $S_{k}*T_{1}^{*}$ both are forests, so the unreduced concatenation $(S_{k} *T_{2}^{*})\circ (S_{k} * T_{1}^{*})^{*}$ is a biforest. However, reducing a biforest results in another biforest, so $\Theta_{k}(g)$ and $\Theta_{k}(h)$ are biforests.

Because $\Theta_{k}$ is an isomorphism, $g\neq h$ implies $\Theta_{k}(g)\neq\Theta_{k}(h)$. Proposition \ref{prop: biforest} implies $T(\Theta_{k}(g))\not\cong T(\Theta_{k}(h))$.
\end{proof}

\section{Orientability}\label{sec: orientability}

The Khovanov chain complexes, considered in Section \ref{sec: Khovanov}, are defined for oriented tangles. To this end,  we will now consider the analogues of the above constructions and results in the oriented case.

\subsection{Oriented strand diagrams and the oriented Thompson group} Orientations of forests were introduced in \cite{HOMFLY}. In this section we extend this to a notion of oriented strand diagrams, from which we can build oriented tangles. 
\begin{defn}
    An orientation on an $(m, n)$-strand diagram $\Gamma$ is an assignment of $+$ or $-$ to each component of the complement of $\Gamma$, except for the rightmost region, such that the signs around each trivalent vertex correspond to one of the four cases shown in Figure \ref{fig: signing rules}. The region on the right of the crossing in each case appears to be unlabelled; however in fact it has a sign determined near its right boundary. An exception is the rightmost region which does not have a sign.
    $\Gamma$ is said to be orientable if it admits an orientation. 
\end{defn}

\begin{figure}[ht]
    \centering
    \begin{subfigure}{0.2\textwidth}
        \centering
        \includegraphics{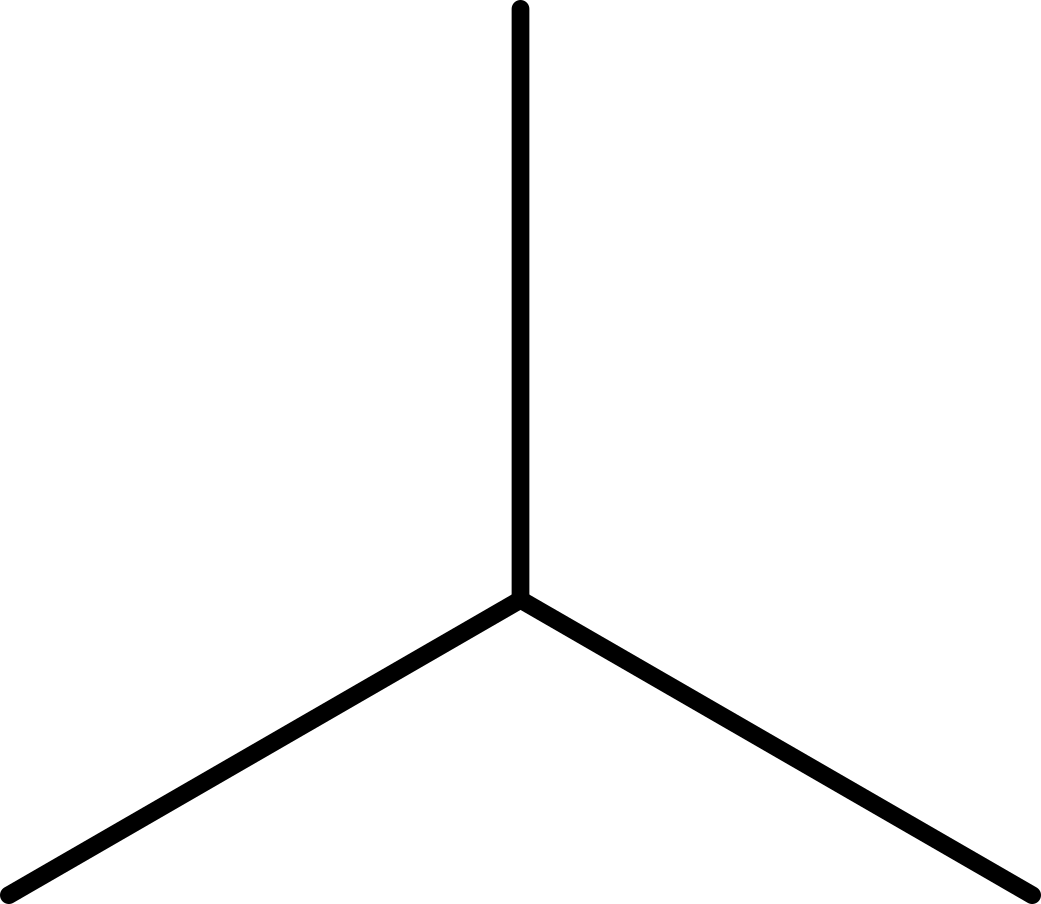}
        \put(-45, 20){$+$}
        \put(-30, -3){$-$}
        \caption{}
        \label{fig: positive split}
    \end{subfigure}
    \begin{subfigure}{0.2\textwidth}
        \centering
		\includegraphics{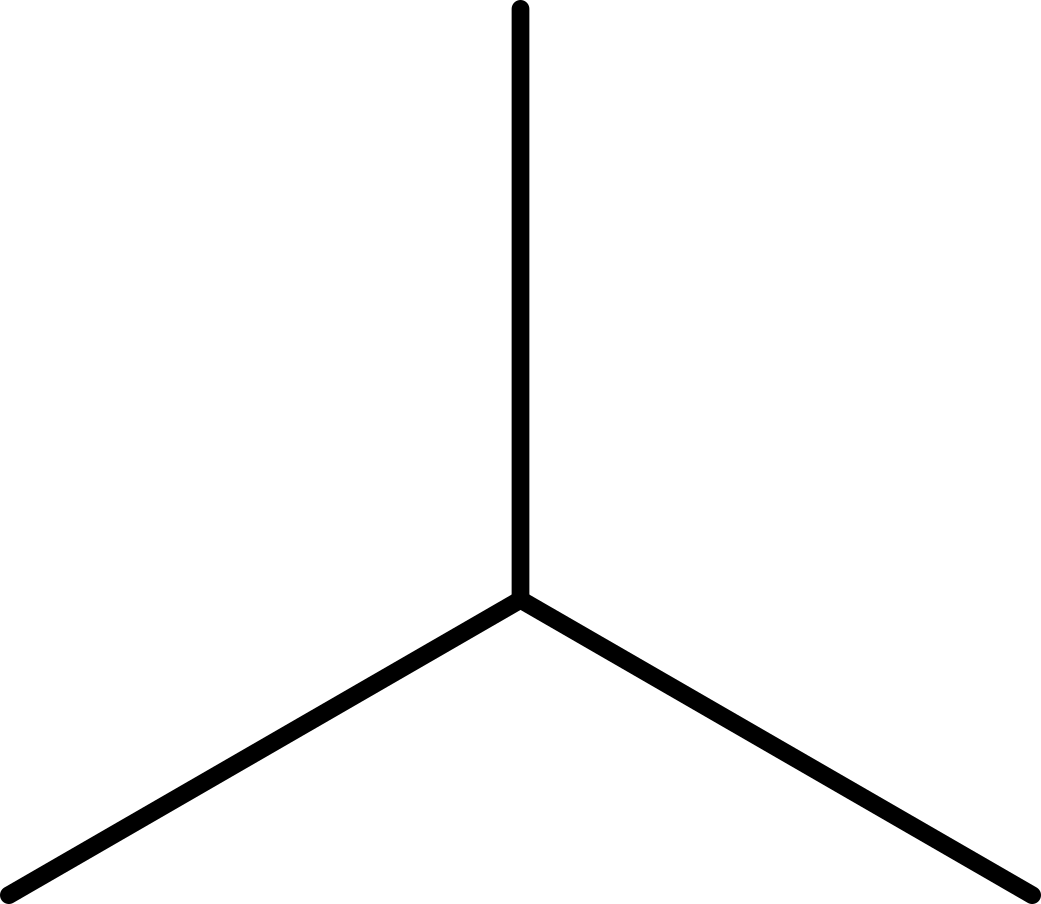}
        \put(-45, 20){$-$}
        \put(-30, -3){$+$}
		\caption{}
		\label{fig: negative split}
    \end{subfigure}
    \begin{subfigure}{0.2\textwidth}
        \centering
		\includegraphics{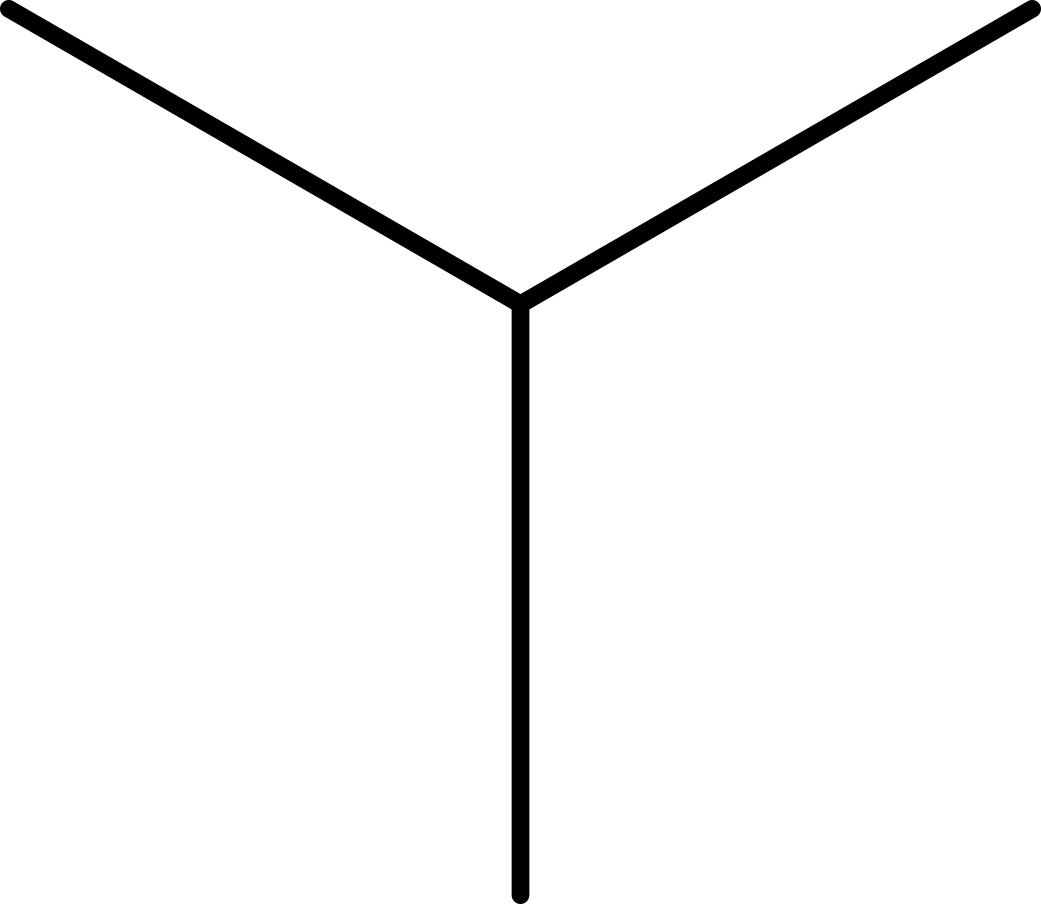}
        \put(-45, 20){$+$}
        \put(-30, 40){$-$}
		\caption{}
		\label{fig: positive merge}
    \end{subfigure}
    \begin{subfigure}{0.2\textwidth}
        \centering
		\includegraphics{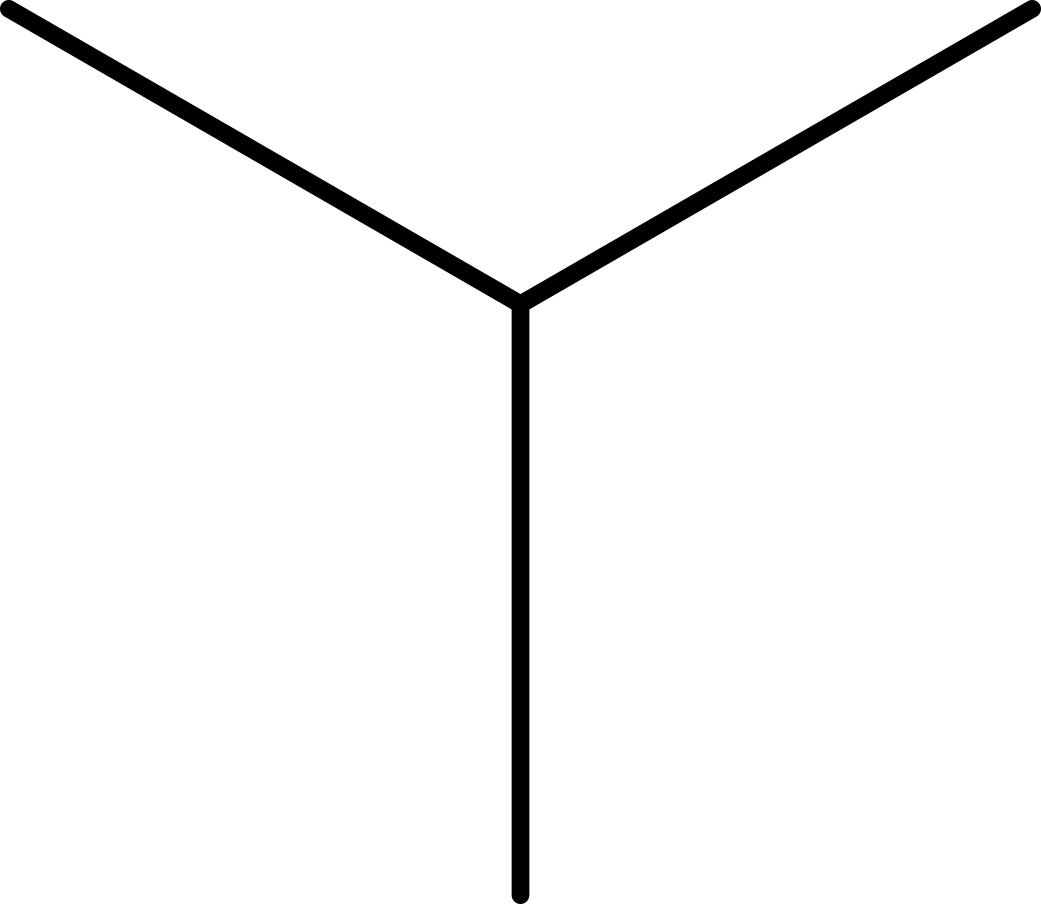}
        \put(-45, 20){$-$}
        \put(-30, 40){$+$}
		\caption{}
		\label{fig: negative merge}
    \end{subfigure}
    \caption{(A) A positive split. (B) A negative split. (C) A positive merge. (D) A negative merge.}
    \label{fig: signing rules}
\end{figure}
 
Any forest is orientable. We can arbitrarily put a sign in the leftmost region and in the type (4) regions between the components; then the signs in the other regions are determined uniquely (see \cite[Proposition $3.2$]{HOMFLY}). Therefore any forest with $n$ roots admits exactly $2^n$ orientations, and any tree $s$ admits exactly two orientations, corresponding to the two choices of a sign in the leftmost region. We denote the positively oriented $s$ (with $+$ in the leftmost region) by $\vec{s}$. By convention, we will assume that every oriented tree is positively oriented from now on.

\begin{defn} \label{def: n sign}
    An $n$-sign is a sequence of $n$ $+$'s or $-$'s. 
    We will follow the convention in the literature \cite{HOMFLY} that the first two signs in the sequence are $+, -$.
\end{defn}

\begin{defn} \label{def: 2^k n sign}
    If $\nu$ is an $n$-sign, we can replace each $(...,+,...)$ in $\nu$ by $(...,+, -,...)$, and each $(...,-,...)$ by $(...,-, +,...)$, and denote the resulting $2n$-sign by $2\nu$. Let $2^{k}\nu$ denote the $n$-sign that results from repeating this process $k$ times.
\end{defn}

Note that an oriented $(m, n)$-strand diagram $\vec{\Gamma}$ induces a sequence of $m$ signs on the top side of the square, and a sequence of $n$ signs on the bottom side. If both of them start with $+, -$, let $\mu$ denote the top $m$-sign and $\nu$ the bottom $n$-sign $\nu$; in this case we say that $\vec{\Gamma}$ is an oriented strand diagram from $\mu$ to $\nu$, or $\vec{\Gamma}$ is a $(\mu, \nu)$-strand diagram. We denote the collection of $(\mu, \nu)$-strand diagrams by $\mathcal{D}^\mu_\nu$. 

We get an $(m, n)$-strand diagram by forgetting the orientation of a $(\mu, \nu)$-strand diagram.  Also note that an orientation of an orientable $(m, n)$-strand diagram is determined by the $m$-sign or the $n$-sign induced by it. So we may consider $\mathcal{D}^\mu_\nu$ as a subset of $\mathcal{D}^m_n$.

Given a $(\mu, \nu)$-strand diagram $\vec{\Gamma}$, we define $(\vec{\Gamma})^*$ to be the $(\nu, \mu)$-strand diagram obtained by taking vertical symmetry of $\vec{\Gamma}$. Given a $(\mu, \nu)$-strand diagram $\vec{\Gamma}_1$ and a $(\nu, \rho)$-strand diagram $\vec{\Gamma}_2$, we define $\vec{\Gamma}_2 \circ \vec{\Gamma}_1$ to be the $(\mu, \rho)$-strand diagram obtained by concatenating strand diagrams and signed regions. The following extends the notion of a reduced strand diagrams to the oriented case.

\begin{defn} \label{def: oriented reduction moves}
    A reduction of an oriented strand diagram is a sequence of moves shown in Figure \ref{fig: oriented reduction moves}.
	
    An oriented strand diagram is said to be reduced if it is not subject to any reductions. We denote the collection of reduced $(\mu, \nu)$-strand diagrams as $\mathcal{R}^\mu_\nu$; it can be regarded as a subset of $\mathcal{D}^{\mu}_{\nu}$.

    Given a reduced $(\mu, \nu)$-strand diagram $\vec{\Gamma}_1$ and a reduced $(\nu, \rho)$-strand diagram $\vec{\Gamma}_2$, we define $\vec{\Gamma}_2  * \vec{\Gamma}_1$ to be the reduced $(\mu, \rho)$-strand diagram obtained by fully reducing $\vec{\Gamma}_2 \circ \vec{\Gamma}_1$. Similarly to the case of reduced unoriented strand diagrams, $\mathcal{R}^\mu_\mu$ is a group with multiplication~$*$.
\end{defn}

\begin{figure}[ht]
    \centering
    \begin{subfigure}{0.2\textwidth}
        \centering
        \includegraphics{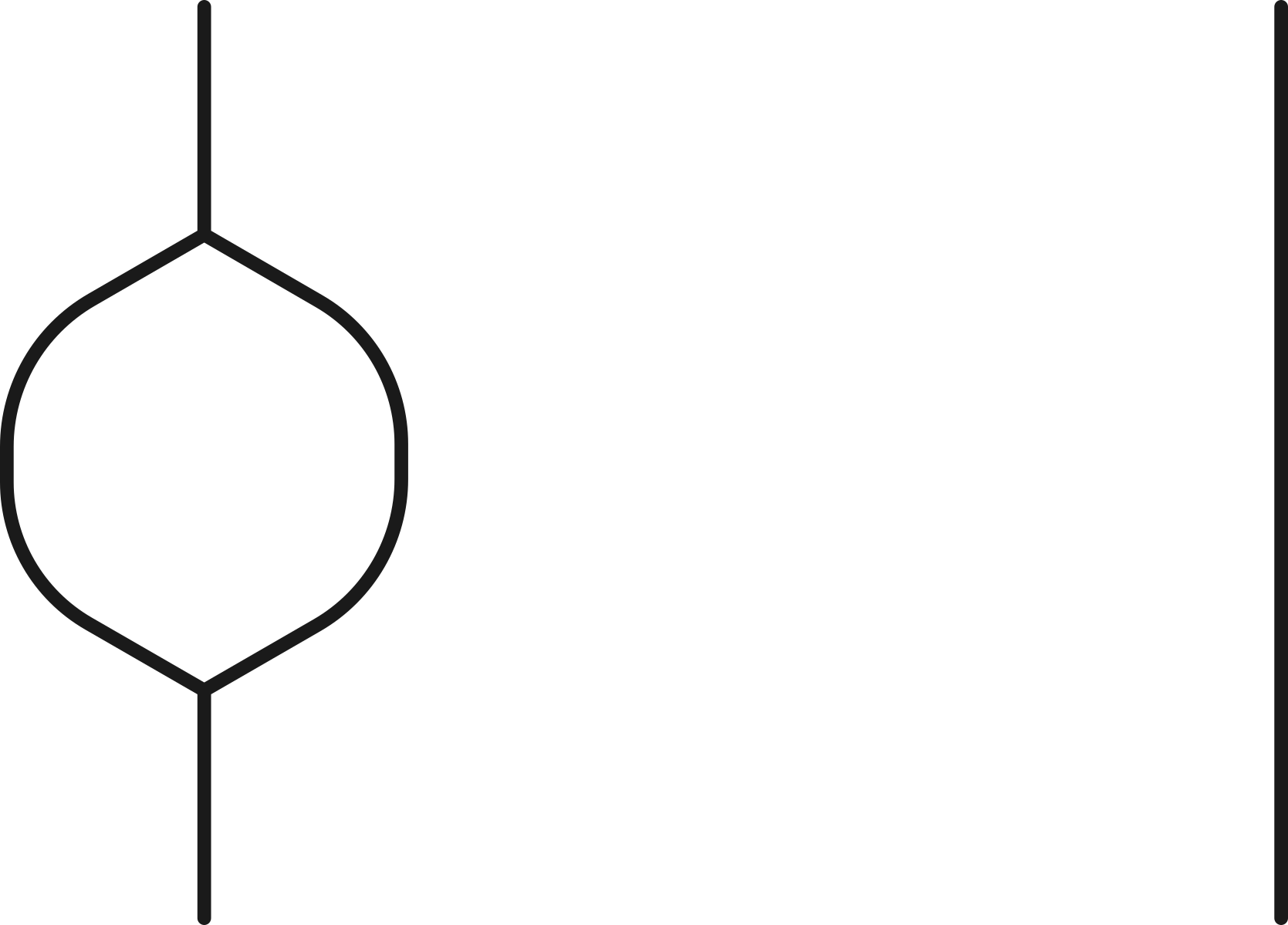}
        \put(-45, 26){$\longrightarrow$}
        \put(-95, 26){\small $+$}
        \put(-72, 26){\small $-$}
        \put(-18, 26){\small $+$}
        \caption{}
    \end{subfigure}
    \hfill
    \begin{subfigure}{0.2\textwidth}  
        \centering 
        \includegraphics{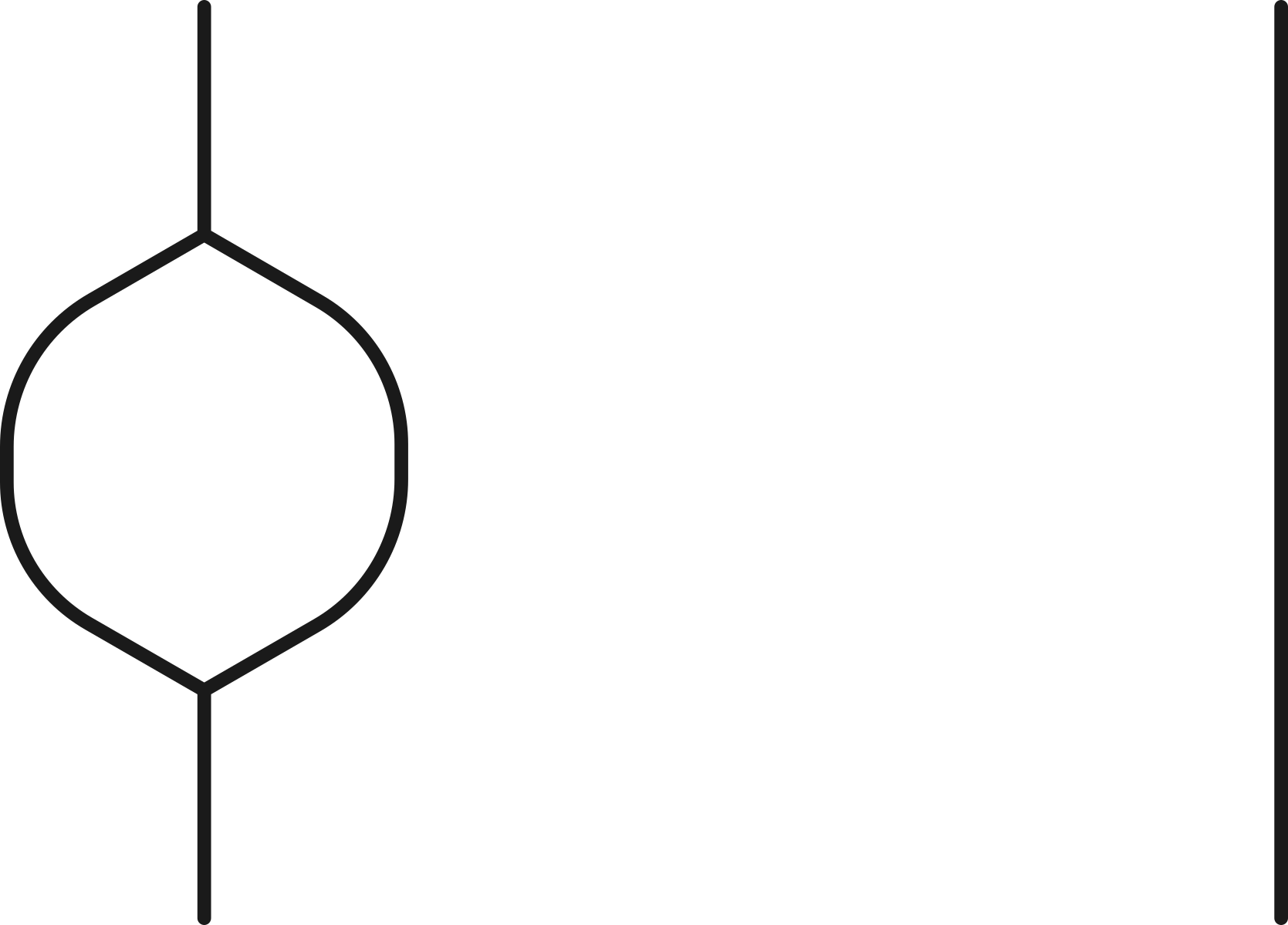}
        \put(-45, 26){$\longrightarrow$}
        \put(-95, 26){\small $-$}
        \put(-72, 26){\small $+$}
        \put(-18, 26){\small $-$}
        \caption{}
    \end{subfigure}
    \hfill
    \begin{subfigure}{0.2\textwidth}   
        \centering 
        \includegraphics{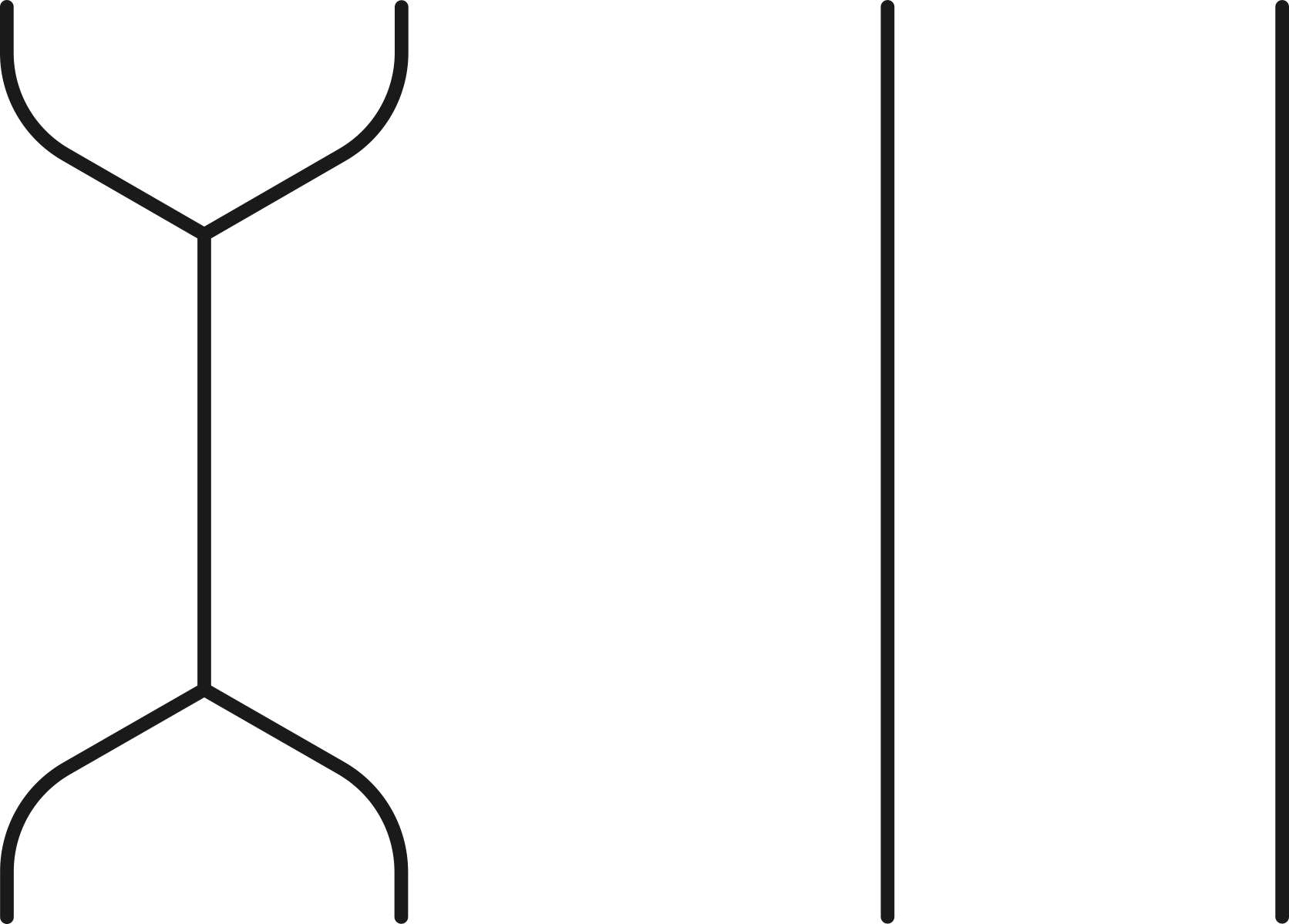}
        \put(-55, 26){$\rightarrow$}
        \put(-85, 26){\small $+$}
        \put(-72, 50){\small $-$}
        \put(-72, 2){\small $-$}
        \put(-38, 26){\small $+$}
        \put(-20, 26){\small $-$}
        \caption{}
    \end{subfigure}
    \hfill
    \begin{subfigure}{0.2\textwidth}   
        \centering 
        \includegraphics{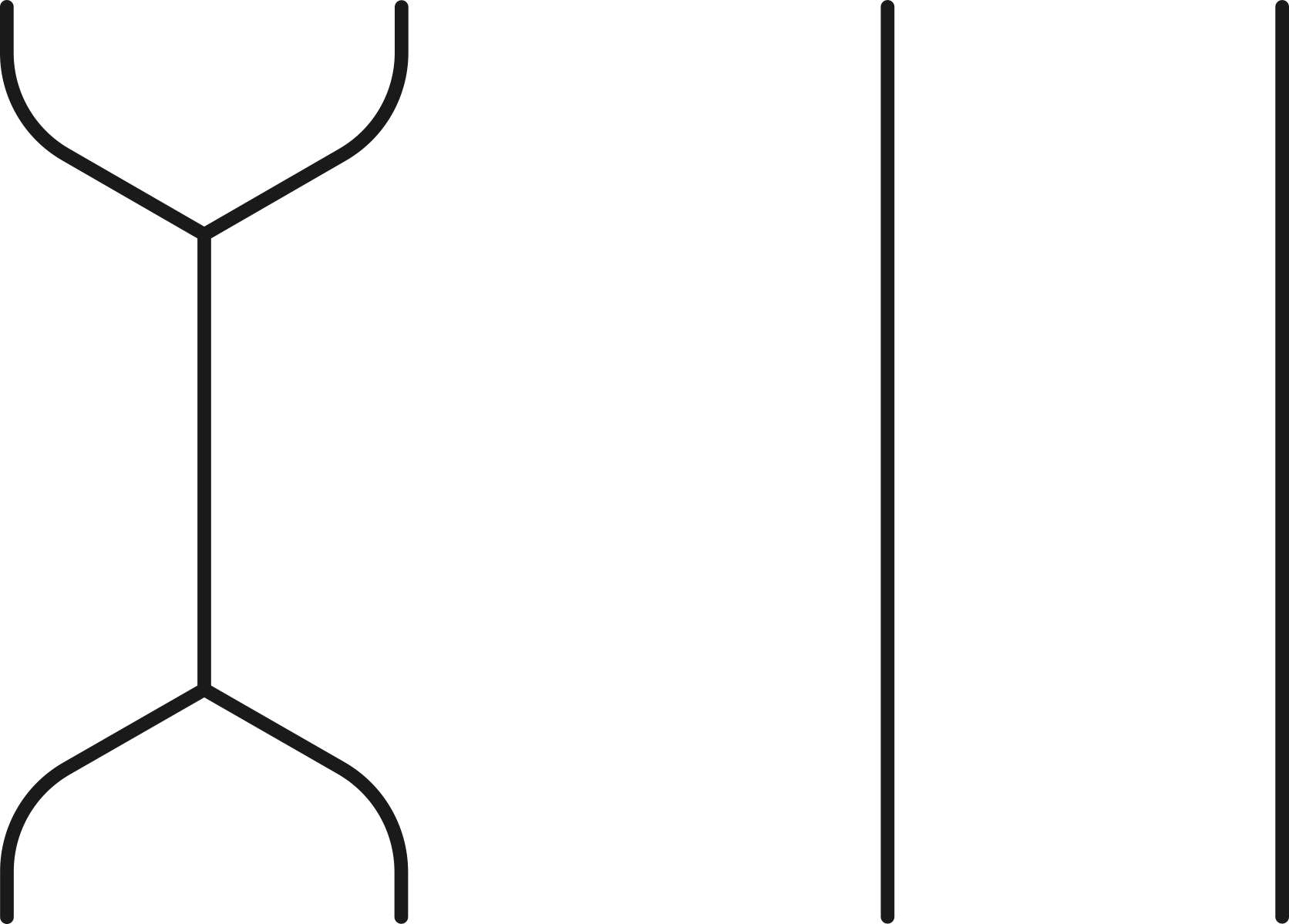}
        \put(-55, 26){$\rightarrow$}
        \put(-85, 26){\small $-$}
        \put(-72, 50){\small $+$}
        \put(-72, 2){\small $+$}
        \put(-38, 26){\small $-$}
        \put(-20, 26){\small $+$}
        \caption{}
    \end{subfigure}
    \caption{(A) A positve Type I move. (B) A negative Type I move. (C) A positive Type II move. (D) A negative Type II move.}
    \label{fig: oriented reduction moves}
\end{figure}

Forgetting orientations, we may consider $\mathcal{R}^\mu_\nu$ as a subset of $\mathcal{R}^m_n$. In particular, $\mathcal{R}^\mu_\mu$ can be regarded as a subgroup of $\mathcal{R}^m_m$. In Section \ref{sec:strand diagrams} we established an isomorphism between $F$ and $\mathcal{R}^{m}_{m}$, for each $m$. Next we will consider an analogue for  $\mathcal{R}^\mu_\mu$.

Jones introduced the subgroup $\vec{F} \leq F$, defined as elements $g \in F$ for which the resulting link diagram bounds an orientable surface, obtained by checkerboard coloring, see Figure \ref{fig: surface}. Fixing the leftmost region of this surface to be positive, every element of $\vec{F}$ produces an oriented link, whose orientation is induced by that of the surface it bounds \cite{jones14}. 
\begin{center}
\begin{figure}[H]
\includegraphics[scale=0.35]{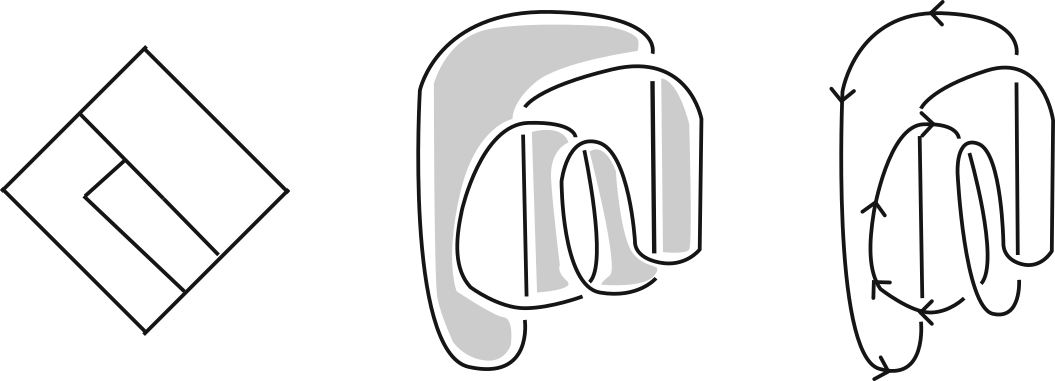}
{\small 
\put(-166,40){$+$}
\put(-139,40){$-$}
\put(-120,40){$+$}
\put(-104,40){$-$}
}
\caption{An oriented link built from an element of $\vec{F}$.}
\end{figure} \label{fig: surface}
\end{center}

As a key step toward generalizing this to build oriented $(n,n)$-tangles from $\vec{F}$, we now discuss isomorphisms between $\vec{F}$ and groups of oriented strand diagrams. Starting with the previously discussed isomorphism $\Theta_{0}$, observe that for $g \in \vec{F}$ each region of the complement of $\Theta_{0}(g)$ corresponds to a unique shaded region of the checkerboard surface resulting from $g$. This can be used to produce an orientation on $\Theta_{0}(g)$, by letting regions of the complement of $\Theta_{0}(g)$ inherit the signs of their associated regions of the checkerboard surface. Due to the convention that the leftmost region is always assigned $+$, the orientation on $\Theta_{0}(g)$ results in an element of $\mathcal{R}^{+}_{+}.$ 
Here $\mathcal{R}^{+}_{+}$ stands for $\mathcal{R}^\mu_\mu$ where $\mu$ is the $1$-sign $+$.
We now show that this restriction $\vec{\Theta}_{0}:=\Theta_{0}|_{\vec{F}}\colon\vec{F} \to \mathcal{R}^{+}_{+}$ is surjective, and therefore an isomorphism. 

\begin{prop}
    The map $\vec{\Theta}_{0}\colon \vec{F} \to \mathcal{R}_+^+$ is an isomorphism.
\end{prop}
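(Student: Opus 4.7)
The plan is to use that $\vec\Theta_0$ is, by construction, the restriction to $\vec F$ of the group isomorphism $\Theta_0\colon F\to\mathcal R^1_1$, so it is automatically injective and respects the underlying unoriented operation; the discussion immediately preceding the proposition already explains why it lands in $\mathcal R^+_+$. Consequently, the only substantive task is to prove surjectivity, as indicated in the sentence before the statement.

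First I would lay down the compatibility needed to view $\vec\Theta_0$ as a group homomorphism into $\mathcal R^+_+$: that concatenation of checkerboard-signed pictures matches concatenation of oriented strand diagrams, and that the oriented reduction moves of Figure \ref{fig: oriented reduction moves} are exactly the unoriented reductions together with the forced local sign compatibility. This amounts to a routine local check at each split, merge, and reduction pattern, comparing the four cases of Figure \ref{fig: signing rules} with the local shaded regions of the Jones checkerboard surface.

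The main step is surjectivity. Given $\vec\Gamma\in\mathcal R^+_+$, let $\Gamma$ denote the underlying unoriented strand diagram and set $g:=\Theta_0^{-1}(\Gamma)\in F$; I need to show $g\in\vec F$ and $\vec\Theta_0(g)=\vec\Gamma$. The Jones link diagram produced by $g$ is built from the pair-of-trees diagram $\Gamma$ by resolving each trivalent vertex into a crossing, so the complementary regions of the link diagram are in natural bijection with those of $\Gamma$. Under this bijection the $\pm$-labelling furnished by $\vec\Gamma$ transports to a two-coloring of the regions of the link diagram, and the four local rules in Figure \ref{fig: signing rules} are precisely what force adjacent regions across every crossing to carry opposite signs. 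In other words, the transported labelling is a checkerboard coloring, so the associated checkerboard surface is orientable, giving $g\in\vec F$. The normalization that the leftmost region carries $+$ then forces $\vec\Theta_0(g)=\vec\Gamma$, so $\vec\Theta_0$ is surjective.

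The most delicate part, I expect, is the dictionary between the local signing rules at splits and merges in Figure \ref{fig: signing rules} and the parity-flip across a crossing in the Jones checkerboard picture. These rules look like a strong algebraic constraint, but the content of the surjectivity argument is that they are exactly the constraint imposed by requiring a consistent checkerboard coloring on the corresponding link diagram. Once this alignment is verified locally at a single split and a single merge, the global conclusion is immediate by planar topology, and the homomorphism property likewise reduces to a local sign-tracking verification through the reductions of Figure \ref{fig: oriented reduction moves}.
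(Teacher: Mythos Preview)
Your overall strategy---take $g=\Theta_0^{-1}(\Gamma)$ and then argue $g\in\vec F$ by transporting the $\pm$-data to the link picture---is sound and is genuinely different from the paper's proof. The paper instead exploits the tree structure: any $\vec\Gamma\in\mathcal R^+_+$ decomposes as $(\vec t)^* * \vec s$ for oriented trees $\vec s,\vec t$ whose leaf-sign sequences agree, $\sigma_s=\sigma_t$; the element $g\in F$ given by the tree pair $(s,t)$ then lies in $\vec F$ by Jones' characterization of $\vec F$ in terms of matching leaf-signs. This is shorter because it replaces your local topological verification by a single invocation of the known combinatorial description of $\vec F$.

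There is, however, a real imprecision in your key step. You write that the transported $\pm$-labelling ``is a checkerboard coloring, so the associated checkerboard surface is orientable.'' That implication is false as stated: every planar link diagram admits a checkerboard coloring, so this cannot be the content of $g\in\vec F$. What is actually going on is that the regions of $\Gamma$ correspond only to the \emph{shaded} regions of the checkerboard picture (each region $P_i$ is split in two by the added edge in the construction of the link diagram), and your $\pm$-signs should be read not as a bicoloring of all regions but as a choice of \emph{orientation} on each shaded disk of the checkerboard surface. The four local rules in Figure~\ref{fig: signing rules} are then precisely the compatibility conditions at the twisted bands (crossings) that allow these local orientations to glue to a global orientation of the surface. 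Once you rephrase the argument this way, it goes through; but as written, the logical link ``checkerboard coloring $\Rightarrow$ orientable'' is a gap.
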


\begin{proof}
    We just need to show $\vec{\Theta}_{0}$ is surjective.

    Indeed, for any $\vec{\Gamma} \in \mathcal{R}^+_+$, we can decompose $\vec{\Gamma}$ into a tree $\vec{s}$ and an inverse tree $(\vec{t})^*$ satisfying $\sigma_s = \sigma_t$. Here, given an oriented tree $\vec s$ with $n$ leaves, $\sigma_s$ denotes the $n$-sign induced by the orientation of $\vec s$. 
    Let $g$ be the element in $F$ corresponding to the tree diagram $(s, t)$. Then $g \in \vec{F}$ and $\vec{\Gamma} = \vec{\Theta}_{0}(g)$.
\end{proof}

For any oriented tree $\vec s$, we have an isomorphism $\vec{\Phi}_s\colon \mathcal{R}^+_+ \to \mathcal{R}^{\sigma_s}_{\sigma_s}$ sending $\vec{\Gamma}$ to its conjugate $\vec{s} * \vec{\Gamma} * (\vec{s})^*$. The maps $\vec{\Phi}_s$ are analogous to the isomorphisms $ \mathcal{R}^1_1 \to \mathcal{R}^n_n$, considered in the unoriented case in the paragraph preceding Definition \ref{def: gammakg}.
The inverse $(\vec{\Phi}_s)^{-1}$ uses conjugation with $(\vec{s})^*$ instead.

\begin{cor} \label{cor: oriented Thompson group and some oriented strand diagram groups are isomorphic}
    Given any $n\in {\mathbb N}$ and an $n$-sign $\nu$, we have $\vec{F} \cong \mathcal{R}^{\nu}_{\nu}$.
\end{cor}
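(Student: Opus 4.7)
The plan is to deduce the corollary as a consequence of the two preceding isomorphisms: combine $\vec{\Theta}_0\colon \vec{F} \to \mathcal{R}^+_+$ with $\vec{\Phi}_s\colon \mathcal{R}^+_+ \to \mathcal{R}^{\sigma_s}_{\sigma_s}$ for a suitably chosen oriented tree $\vec{s}$. For the composition to land in $\mathcal{R}^\nu_\nu$, one needs $\sigma_s = \nu$. Thus the corollary reduces to the realizability claim: \emph{for every $n$-sign $\nu$, there is an oriented tree $\vec{s}$ with $n$ leaves and $\sigma_s = \nu$.}

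I would prove this realizability claim by induction on $n$. The base cases $n=1$ (trivial tree with $\sigma=(+)$) and $n=2$ (positive caret with $\sigma=(+,-)$) are immediate. For $n \geq 3$, write $\nu = (+,-,\epsilon_3,\ldots,\epsilon_n)$. By the signing rules, a split whose two children are both leaves contributes the adjacent pair $(+,-)$ if positive and $(-,+)$ if negative. Running this in reverse, construct $\vec{s}$ from a smaller tree whose sign sequence is $\nu' := (+,-,\epsilon_4,\ldots,\epsilon_n)$ by attaching a caret to one of its leaves. If $\epsilon_3 = -$, attach a positive caret at the first leaf (whose left region is $+$), inserting a $-$ after position $1$; if $\epsilon_3 = +$, attach a negative caret at the second leaf (whose left region is $-$), inserting a $+$ after position $2$. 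In either case the resulting leaf sign sequence is exactly $\nu$.

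The main obstacle is the local bookkeeping that makes the inductive step work: verifying that attaching a caret at a given leaf perturbs the sign sequence exactly by the insertion of one letter at the right position, and that its allowable orientation is pinned down by the sign of the region immediately to the left of that leaf. Both facts follow directly from Figure \ref{fig: signing rules}, since the sign of the left region of a leaf is inherited along the edge leading to it. With the realizability claim in hand, the composition $\vec{\Phi}_s \circ \vec{\Theta}_0\colon \vec{F} \to \mathcal{R}^\nu_\nu$ is the desired isomorphism.
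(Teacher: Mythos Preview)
Your approach is correct and identical to the paper's: both compose $\vec{\Phi}_s \circ \vec{\Theta}_0$ after choosing an oriented tree $\vec{s}$ with $\sigma_s = \nu$. The only difference is that the paper dispatches the realizability claim (every $n$-sign arises as $\sigma_s$ for some tree) with a citation to \cite[Section 5.2]{jones14}, whereas you supply a self-contained inductive construction. Your induction is sound: attaching a caret at leaf $i$ inserts the sign $-\nu'_i$ immediately after position $i$, with the caret's orientation forced by the existing sign $\nu'_i$; your two cases (caret at leaf $1$ when $\epsilon_3=-$, caret at leaf $2$ when $\epsilon_3=+$) then produce exactly $\nu$ from $\nu'$.
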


\begin{proof}
   If $\nu$ is an $n$-sign, we can find a tree $\vec{s}$ such that $\nu = \sigma_s$ (\cite{jones14}, Section 5.2). Then $\vec{\Phi}_s \circ \vec{\Theta}_0\colon \vec{F} \to \mathcal{R}^{\sigma_s}_{\sigma_s} = \mathcal{R}^\nu_\nu$ is the desired isomorphism.
\end{proof}

Therefore, for any $n$-sign $\nu$, we have a nonempty family of isomorphisms $\{\vec{\Gamma}_s = \vec{\Phi}_s \circ \vec{\Theta}_0\}_{\sigma_s = \nu}$ from $\vec{F}$ to $\mathcal{R}^\nu_\nu$. Note that the oriented strand diagram $\vec{\Gamma}_s(g)$ is the unoriented strand diagram $\Gamma_s(g)$ with an orientation on it.

\begin{rem}
The map $\vec{\Phi}_s\colon \mathcal{R}^+_+ \to \mathcal{R}^{\sigma_s}_{\sigma_s}$ is given by conjugation by an oriented tree $\vec{s}$. More generally, we can take conjugate by a positively oriented $(1, n)$-{\em strand diagram} $\vec{\gamma}$, where $\gamma$ is an orientable $(1, n)$-strand diagram and we let $\vec{\gamma}$ have positive orientation.

    Similarly to the case of an oriented tree $\vec{s}$, an oriented $(1, n)$-strand diagram $\vec{\gamma}$ also induces a sequence of $+$'s and $-$'s, denoted as $\sigma_{\gamma}$. However, it is possible that the second sign in this sequence is not $-$. We call such a sequence a generalized $n$-sign. It turns out that every generalized $n$-sign can be induced by some $\gamma$, so we can follow the same strategy to prove $\vec{F} \cong \mathcal{R}_\nu^\nu$ for any generalized $n$-sign $\nu$.
\end{rem}

As in the unoriented case, we will always take $s$ to be the symmetric tree $S_k$ for some positive integer $k$. It turns out that the $2^k$-sign induced by $S_k$ has a special form. Recall that for any $n$-sign $\nu$ and positive integer $k$, we introduced a $(2^k n)$-sign $2^k \nu$ in Definition \ref{def: n sign}.

\begin{prop}
    Given a positive integer $k$, the $2^k$-sign $\sigma_{S_k}$ induced by symmetric tree $S_k$ is $2^k +$, where $+$ is considered as a $1$-sign.
\end{prop}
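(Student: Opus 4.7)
The plan is to induct on $k$, using the recursive structure of $S_k$ as a root split with two copies of $S_{k-1}$ attached to its two outgoing strands. The base case $k=0$ is immediate: $S_0$ is the trivial $(1,1)$-strand diagram, so its unique leaf inherits the sign $+$ from the leftmost region, matching $2^0(+) = (+)$.

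For the inductive step, I would view $S_k$ as a root split whose two outgoing strands each carry a copy of $S_{k-1}$. Because the leftmost region carries sign $+$, the root split is of positive type, and by Figure \ref{fig: signing rules} the new region created between the two child subtrees is labelled $-$. The left copy of $S_{k-1}$ is thus oriented with its leftmost region $+$, so by the inductive hypothesis its $2^{k-1}$ leaves produce the sign sequence $2^{k-1}(+)$. The right copy of $S_{k-1}$ is oriented with leftmost region $-$; by the global sign-flip symmetry of the four rules in Figure \ref{fig: signing rules} (the positive and negative splits swap under $+\leftrightarrow -$, as do the positive and negative merges), its leaf sequence is the componentwise negation $\overline{2^{k-1}(+)}$. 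Concatenating left to right gives $\sigma_{S_k} = 2^{k-1}(+) \cdot \overline{2^{k-1}(+)}$.

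It then remains to reconcile this concatenation with $2^k(+)$ as defined in Definition \ref{def: 2^k n sign}. The doubling operation satisfies two easily verified properties: it distributes over concatenation, $2(\alpha\cdot\beta) = 2\alpha \cdot 2\beta$, and it commutes with sign-flip, $2\bar\nu = \overline{2\nu}$. A short side induction using these two identities, with base case $2^1(+) = (+,-) = (+)\cdot(-)$, yields the recursion $2^k(+) = 2^{k-1}(+)\cdot\overline{2^{k-1}(+)}$, closing the main induction.

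The main point requiring care is purely bookkeeping: the inductive hypothesis must be stated in a form strong enough to apply to subtrees oriented with either sign in their leftmost region, and the concatenation-based recursion for $2^k(+)$ must be extracted from the iterative definition given in Definition \ref{def: 2^k n sign}. Neither task is deep, but both call for a clean setup to avoid notational confusion.
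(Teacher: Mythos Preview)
Your argument is correct, but it uses a different recursion than the paper's. You decompose $S_k$ from the top: a root split with two copies of $S_{k-1}$ attached. The paper instead decomposes from the bottom: $S_{k+1}$ is obtained from $S_k$ by appending a split at every leaf. Because the definition of $2\nu$ in Definition~\ref{def: 2^k n sign} is exactly ``replace each sign by the pair it generates at a split,'' the paper's inductive step is a one-liner: adding a split at every leaf of $S_k$ sends $\sigma_{S_k}$ to $2\sigma_{S_k}$ by direct inspection of Figure~\ref{fig: signing rules}, so $\sigma_{S_{k+1}} = 2(2^k+) = 2^{k+1}+$. Your top-down decomposition, by contrast, yields $\sigma_{S_k} = 2^{k-1}(+)\cdot\overline{2^{k-1}(+)}$, which does not match the doubling definition directly and forces you into the second side induction you describe. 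That side induction is fine, and your sign-flip symmetry observation is a clean way to handle the right subtree, but the paper's route avoids it entirely by choosing the recursion that is already aligned with Definition~\ref{def: 2^k n sign}.
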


\begin{proof}
    We give a proof by induction. When $k = 1$, $S_1$ has a single split. After we put a $+$-sign in the leftmost region, this split becomes a positive split. By Figure \ref{fig: positive split}, a positive split induces $(+, -)$, so the $2$-sign induced by $S_1$ is $(+, -) = 2+$.

    Assume $S_k$ induces the $2^k$-sign $\sigma_{S_k} = 2^k +$. Then notice that to get $S_{k+1}$, we just need to add a split at each leaf of $S_k$. The split at the $i^{th}$ leaf of $S_k$ is a positive (resp. negative) split if the $i^{th}$ sign in $\sigma_{S_k}$ is $+$ (resp. $-$), because this sign is in the upper left region of the split. By Figure \ref{fig: positive split} and Figure \ref{fig: negative split}, a positive split induces $(..., +, -, ...)$, a negative split induces $(..., -, +,...)$. Thus, to obtain $\sigma_{S_{k + 1}}$, we just need to replace each $(...,+,...)$ in $\sigma_{S_k}$ by $(...,+, -,...)$, each $(...,-,...)$ in $\sigma_{S_{k}}$ by $(...,-,+,...)$. It follows that $\sigma_{S_{k+1}} = 2\sigma_{S_{k}} = 2 (2^k +) = 2^{k+1} +$.
\end{proof}

Thus for any positive integer $k$, we have $\vec{F} \cong \mathcal{R}^{2^k +}_{2^k +}$, with an explicit isomorphism given by $\vec{\Theta}_{k} := \vec{\Phi}_{S_k} \circ \vec{\Theta}_0$. 

\subsection{From oriented strand diagrams to oriented tangles}

Let $S$ be an oriented $(p, q)$-tangle in the square (with $p$ points on the top side and $q$ points on the bottom side of the square), and consider the induced upward or downward orientations at its endpoints. On both the top and the bottom, we indicate the downward orientation of an endpoint with a $+$, and the upward orientation with a $-$ sign. Note that the signs are positioned at the endpoints of the tangle; this is different from the case of strand diagrams where signs denote orientations of the subintervals on the top and the bottom of the square, separated by the endpoints. If $S$ induces a $p$-sign $\zeta$ on the top, and a $q$-sign $\eta$ on the bottom side, we say that $S$ is an oriented tangle from $\zeta$ to $\eta$.

\begin{defn}
    Let $\vec{\mathfrak{D}}$ be the category with objects $n$-signs for arbitrary positive integers $n$. The morphisms from $\mu$ to $\nu$ are oriented strand diagrams from $\mu$ to $\nu$. Compositions are concatenations of strand diagrams and signed regions.

    Similarly, let $\vec{\mathfrak{T}}$ be the category with objects $n$-signs for arbitrary positive integers $n$. The morphisms from $\mu$ to $\nu$ are oriented tangles from $2\mu$ to $2\nu$. Compositions are concatenations of tangles.
\end{defn}

Similarly to the functor $T\colon \mathfrak{D} \to \mathfrak{T}$, we construct a functor $\vec{T}\colon \vec{\mathfrak{D}} \to \vec{\mathfrak{T}}$ sending oriented strand diagrams to oriented tangles.  $\vec{T}$ acts by the identity on objects. Then suppose that we have an oriented strand diagram $\vec{\Gamma}$ from an $m$-sign $\mu$ to an $n$-sign $\nu$. Denote by $\Gamma$ the unoriented strand diagram underlying $\vec{\Gamma}$. We will construct $\vec{T}(\vec{\Gamma})$ by assigning an orientation to the unoriented tangle $T(\Gamma)$.

Suppose $P_1, P_2, ..., P_N$ are signed polygons in the complement of $\vec{\Gamma}$, excluding the rightmost one. Recall that to construct $T(\Gamma)$, we add an edge in each polygon $P_i$ to get a graph $\Gamma'$, then replace each $4$-valent vertex by a crossing as in Figure \ref{fig: 4-valent vertices to oriented crossings}. Notice that each time we add an edge, some signed polygon $P_i$ is split into two regions. To obtain an orientation on $T(\Gamma)$, we move the sign of each $P_i$ to the region to the right of the added edge. Now consider the induced orientation of the boundary arcs of each signed region.

The orientation of $\vec{T}(\vec{\Gamma})$ around each crossing can also be obtained by a local replacement at $4$-valent vertices of $\Gamma'$ as in Figure \ref{fig: 4-valent vertices to oriented crossings}. So the induced orientations of arcs are compatible around crossings.

\begin{figure}
    \centering    
    \includegraphics{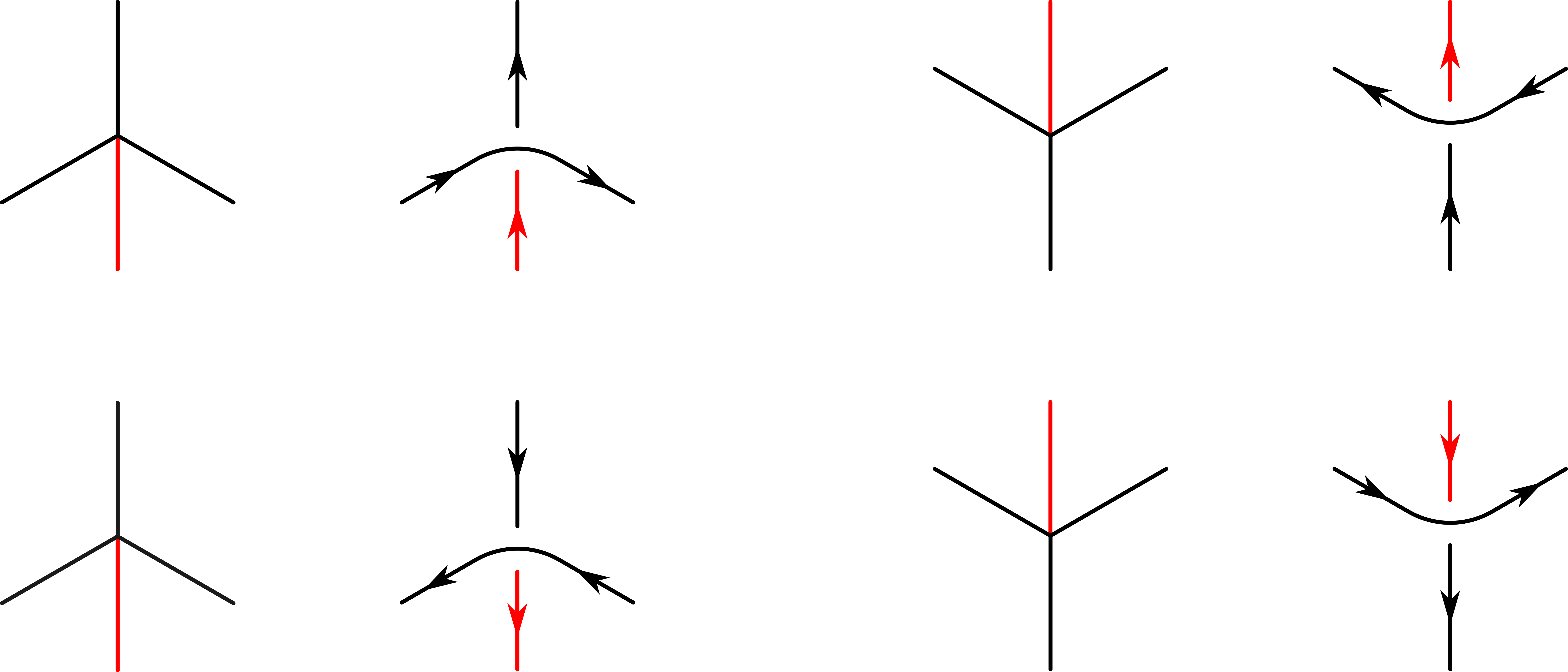}
    \put(-275, 112){$\longrightarrow$}
    \put(-275, 26){$\longrightarrow$}
    \put(-75, 112){$\longrightarrow$}
    \put(-75, 26){$\longrightarrow$}
    \put(-330, 118){\small $+$}
    \put(-302, 92){\small $-$}
    \put(-245, 118){\small $+$}
    \put(-217, 92){\small $-$}
    \put(-330, 32){\small $-$}
    \put(-302, 6){\small $+$}
    \put(-245, 32){\small $-$}
    \put(-217, 6){\small $+$}
    \put(-130, 105){\small $+$}
    \put(-102, 131){\small $-$}
    \put(-45, 105){\small $+$}
    \put(-17, 131){\small $-$}
    \put(-130, 19){\small $-$}
    \put(-102, 45){\small $+$}
    \put(-45, 19){\small $-$}
    \put(-17, 45){\small $+$}
    \caption{Around each crossing of $\vec{T}(\vec{\Gamma})$, its orientation can be equivalently obtained by replacing $4$-valent vertices of $\Gamma'$ by oriented crossings.}
    \label{fig: 4-valent vertices to oriented crossings}
\end{figure}

Note that each $+$-signed region induces the downward ($+$) orientation on its left boundary arc, and the upward ($-$) orientation on its right boundary arcs. The induced orientations are reversed for a $-$ signed region. So if $\vec{\Gamma}$ is an oriented strand diagram from $\mu$ to $\nu$, then $\vec{T}(\vec{\Gamma})$ is indeed an oriented strand diagram from $2\mu$ to $2\nu$ (see Figure \ref{fig: oriented strand diagram to tangle example}).

\begin{figure}
    \centering
    \includegraphics{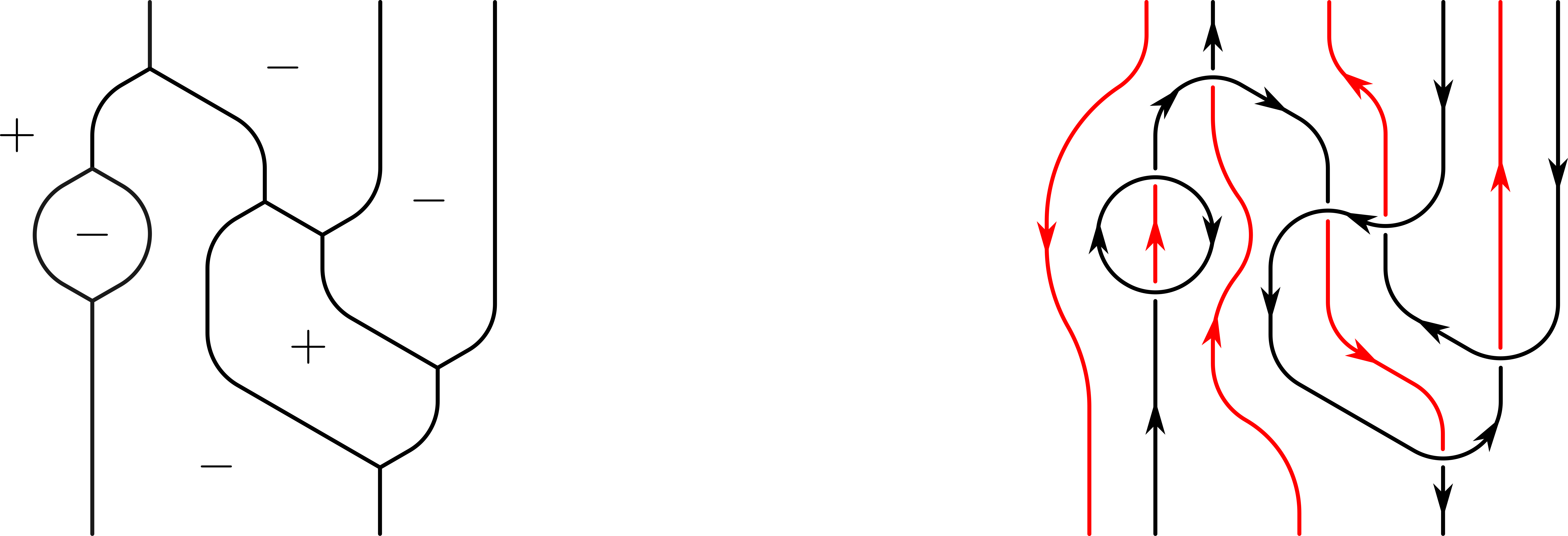}
    \put(-280,-20){$\vec{\Gamma}$}
    \put(-70, -20){$\vec{T}(\vec{\Gamma})$}
    \caption{An oriented strand diagram $\vec{\Gamma}$ from $(+, -, -)$ to $(+, -)$ and its associated oriented tangle $\vec{T}(\vec{\Gamma})$ from $(+, -, -, +, -, +)$ to $(+, -, -, +)$.}
    \label{fig: oriented strand diagram to tangle example}
\end{figure}

Similarly to the asymptotic faithfulness of the construction of unoriented tangles from $F$, we have an oriented counterpart as follows.

\begin{thm}
	Suppose $g, h \in \vec{F}, g \neq h$. Let $n = \max({\rm height}(g), {\rm height}(h))$. For $k$ such that $2^k \geq n$, we have $\vec{T}(\vec{\Theta}_k(g)) \not\cong \vec{T}(\vec{\Theta}_k(h))$.
\end{thm}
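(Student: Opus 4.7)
The plan is to reduce this oriented statement directly to the unoriented asymptotic faithfulness result (Theorem \ref{thm: aymptotic faithfulness for unoriented F}) by invoking forgetful functors. The key observation is that there is an obvious forgetful functor $\vec{\mathfrak D}\to \mathfrak D$ that sends an $n$-sign to $n$ and drops the $\pm$ labels on regions, and similarly a forgetful functor $\vec{\mathfrak T}\to \mathfrak T$ that drops orientations from tangles. By the way $\vec T$ was constructed in Section \ref{sec: orientability}, these two forgetful functors intertwine $\vec T$ and $T$: the unoriented tangle underlying $\vec T(\vec\Gamma)$ is exactly $T(\Gamma)$, where $\Gamma$ is the unoriented strand diagram underlying $\vec\Gamma$.

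I would next verify that $\vec{\Theta}_k$ is compatible with $\Theta_k$ under these forgetful maps. By construction, $\vec{\Theta}_0$ is the restriction of $\Theta_0=\delta$ to $\vec F\subseteq F$ together with an orientation, so its underlying unoriented strand diagram is $\Theta_0(g)$. The map $\vec{\Phi}_{S_k}$ is conjugation by the positively oriented symmetric tree $\vec S_k$, which on underlying strand diagrams is conjugation by $S_k$; hence the underlying unoriented strand diagram of $\vec{\Theta}_k(g)=\vec{\Phi}_{S_k}(\vec{\Theta}_0(g))$ is $S_k*\delta(g)*S_k^*=\Theta_k(g)$.

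With these compatibilities in place, the proof is immediate. Given distinct $g,h\in\vec F$, they are also distinct elements of $F$, and $n=\max(\mathrm{height}(g),\mathrm{height}(h))$ is the same quantity used in Theorem \ref{thm: aymptotic faithfulness for unoriented F}. For $k$ such that $2^k\geq n$, that theorem gives $T(\Theta_k(g))\not\cong T(\Theta_k(h))$ as unoriented $(2^{k+1},2^{k+1})$-tangles. Since an isotopy of oriented tangles is in particular an isotopy of the underlying unoriented tangles, and since the underlying unoriented tangles of $\vec T(\vec{\Theta}_k(g))$ and $\vec T(\vec{\Theta}_k(h))$ are precisely $T(\Theta_k(g))$ and $T(\Theta_k(h))$, we conclude $\vec T(\vec{\Theta}_k(g))\not\cong \vec T(\vec{\Theta}_k(h))$.

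There is no real obstacle beyond the bookkeeping above; the difficulty was already absorbed into the proof of Theorem \ref{thm: aymptotic faithfulness for unoriented F} (the biforest criterion of Proposition \ref{prop: biforest} combined with Lemma \ref{lemma: difference}). The only point that merits a sentence of care is that the biforest decomposition $\Theta_k(g)=(S_k*T_2^*)\circ(S_k*T_1^*)^*$ used in the unoriented proof lifts verbatim to an oriented biforest decomposition of $\vec{\Theta}_k(g)$, but this is not needed for the present argument since the forgetful reduction makes it unnecessary to re-prove anything in the oriented setting.
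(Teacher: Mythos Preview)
Your proposal is correct and follows exactly the paper's approach: the paper's proof is the single sentence that $T(\Theta_k(g)) \neq T(\Theta_k(h))$ for the underlying unoriented strand diagrams, which is precisely your forgetful-functor reduction to Theorem~\ref{thm: aymptotic faithfulness for unoriented F}. Your write-up is simply a more explicit version of the same argument.
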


The proof is immediate, since $T(\Theta_k(g)) \neq T(\Theta_k(h))$, where $\Theta_k(g), \Theta_k(h)$ denote the unoriented strand diagrams underlying 
$\vec\Theta_k(g), \vec\Theta_k(h)$.

\section{A lax group action on Khovanov's chain complexes} \label{sec: Khovanov}
\subsection{Definition of a lax group action}
We now introduce two bicategories involved in the definition of a lax group action. For the relevant background material, we refer the reader to \cite[Ch. $2$]{2catstext}. 
\begin{defn}
    Let $G$ be a group. Define $*_{G}$ to be the bicategory with the single object $\{*\}$ and $\text{Mor}(*, *) = G$, where composition of morphisms $g \circ h$ is given by the group operation $gh$ for all $g, h \in G$. We define the $2$-morphisms to be trivial, and the associator and unitors to be identity natural transformations. 
\end{defn}
\begin{rem}
    Note that $*_{G}$ is also a $2$-category, since its  associators and unitors are identity natural transformations. 
\end{rem} 
\begin{defn}
    Consider the $2$-category  $\mathcal{C}at$, whose objects are small categories, the morphisms are functors, and the $2$-morphisms are natural transformations between functors. 
\end{defn}

\begin{defn}\label{defn: lax group action} Given a group $G$ and a small category $\mathcal{C}$, a lax group action of $G$ on $\mathcal{C}$ is defined to be a lax functor $P\colon *_G \to \mathcal{C}at$ such that $P(*) = \mathcal{C}$.
\end{defn}

We refer the reader to \cite[Definition $4.1.2$]{2catstext} for the definition of a lax functor. From this definition and the properties of $\mathcal{C}at$ and of $*_{G}$, it follows that a lax group action is determined by the following data: 

\begin{enumerate} 
	\item[(a)] a family of functors $\{P_g\colon \mathcal{C} \to \mathcal{C} \mid g \in G\},$
	\item[(b)] a natural transformation $\lambda\colon Id \to P_e$, where $Id$ is the identity endofunctor of $\mathcal{C}$ and $e\in G$ is the identity element,
	\item[(c)] a family of natural transformations $\{t_{h, g}\colon P_h \circ P_g \to P_{hg} \mid g, h \in G\}$,
\end{enumerate} satisfying
\begin{enumerate}
	\item For any $g \in G$, any $A \in \mathcal{C}$, we have $t_{e, g}(A) \circ \lambda_{P_g(A)} = {\rm Id}_{P_{g}(A)}$ and $t_{g, e}(A) \circ P_g(\lambda_A) = {\rm Id}_{P_{g}(A)}$, 
	\item For any $g, h, k \in G$, any $A \in \mathcal{C}$, the following diagram commutes
	\[\begin{tikzcd}
		& {P_k \circ P_h \circ P_g(A)} \\
		{P_{kh} \circ P_g(A)} && {P_k \circ P_{hg}(A)} \\
		& {P_{khg}(A)}
		\arrow["{t_{k, h}(P_g(A))}"', from=1-2, to=2-1]
		\arrow["{t_{kh, g}(A)}"', from=2-1, to=3-2]
		\arrow["{P_k(t_{h, g}(A))}", from=1-2, to=2-3]
		\arrow["{t_{k, hg}(A)}", from=2-3, to=3-2]
	\end{tikzcd}\]	
\end{enumerate}

\begin{rem}
    A genuine group action of $G$ on $\mathcal{C}$ is a pseudo functor (cf. \cite[Definition $4.1.2$]{2catstext}) $Q\colon *_G \to \mathcal{C}at$ such that $Q(*) = \mathcal{C}$, and $Q_g \in \mathcal{M}or(\mathcal{C}, \mathcal{C})$ is an autoequivalence for any $g \in G$. In other words, a lax group action $P$ is a genuine group action if $P$ further satisfies

    \begin{enumerate}
        \item[(3)] $P_g\colon \mathcal{C} \to \mathcal{C}$ is an autoequivalence for any $g$.
        \item[(4)] $\lambda\colon Id \to P_e$ is a natural isomorphism.
        \item[(5)] $t_{h, g}\colon P_h \circ P_g \to P_{hg}$ is a natural isomorphism for any $g, h \in G$.
    \end{enumerate}

\end{rem}

\subsection{Khovanov Chain Complexes}\label{subsec: Khovanov}  We assume the reader is familiar with Khovanov's functor-valued invariant of tangles, specifically, the construction of the ring $H^{n}$ \cite[Section $2.4$]{khovanov} and the chain complex of geometric $(H^{m},H^{n})$-bimodules associated to each $(m,n)$-tangle \cite[Section $3.4$]{khovanov}. Following Khovanov's notation, for a tangle $T$ we refer to its Khovanov chain complex as $\mathcal{F}(T)$. 

Recall that an $(H^{m},H^{n})$-bimodule is called \textit{geometric} if it is isomorphic to a finite direct sum of bimodules $\mathcal{F}(a)$, where $a$ are flat tangles with $2n$ boundary points on the lower edge, and $2m$ boundary points on the upper edge \cite[Section $2.8$]{khovanov}. We use the notation of \cite[Section $2.8$]{khovanov} and let $\mathcal{K}^{m}_{n}$ refer to the category of bounded chain complexes of geometric $(H^{m},H^{n})$-bimodules, up to chain homotopy.
We will consider the homotopy category $\overline{\mathcal{K}}^{m}_{n}$ of (not necessarily bounded) chain complexes of $(H^{m},H^{n})$-bimodules which are countable direct sums of bimodules $\mathcal{F}(a)$. 
\begin{rem}
We note that some of the usual features of the category ${\mathcal{K}}^{m}_{n}$ are not available for $\overline{\mathcal{K}}^{m}_{n}$; for example its Grothendieck group is undefined. This triangulated category will be the setting of the action of the Thompson group constructed below. In fact, the chain complexes associated with strand diagrams in Definition \ref{def: chain complex} are just a mild generalization of Khovanov's invariant of tangles, see Remark \ref{rem: chain complex}.
\end{rem}

In \cite{khovanov2}, Khovanov defined a 2-category $\mathcal{C}$ to be a combinatorial realization of the 2-category of tangle cobordisms. $\mathcal{C}$ has the same objects and 1-morphisms as $\vec{\mathfrak{T}}$. The 2-morphisms of $\mathcal{C}$ are ``movies" of tangle diagrams describing tangle cobordisms, up to Carter-Saito's movie moves. He also defined a 2-category of geometric bimodule complexes $\widehat{\mathbb{K}}$, with objects nonnegative integers, 1-morphisms $Mor(m, n) = \mathcal{K}^m_n$ and $2$-morphisms defined to be all (not necessarily grading preserving) morphisms of complexes of bimodules up to chain homotopy and up to a sign. A well-defined 2-functor $\mathcal{F}: \mathcal{C} \to \widehat{\mathbb{K}}$ is constructed in \cite[Section 4]{khovanov2}. 

\begin{rem}\label{rem: ring}
    One can define a ring similar to Khovanov's ring $H^n$, using tangles associated with strand diagrams. Here a direct sum is taken over strand diagrams, as opposed to the direct sum over flat tangles in \cite{khovanov}. The multiplication in this ring may be thought of as being modeled on the multiplication in Thompson's group $F$. We will not pursue this construction in the present paper.
\end{rem}

\subsection{Construction of the lax group action} We begin by introducing, for each reduced $(m,n)$-strand diagram $D$, a chain complex $C^{*}(D) \in \text{Ob}(\overline{\mathcal{K}}^{m}_{n})$. Recall that every reduced oriented strand diagram is equal to $F_{1}^{*} \circ F_{2}$ for some pair of oriented forests $F_{1}, F_{2}$. In this section we assume that all strand diagrams and tangles are oriented, and for brevity of notation we will omit arrows on symbols denoting oriented tangles and strand diagrams.

\begin{defn} Let $D=F_{1}^{*} \circ F_{2}$ be a reduced oriented $(m,n)$-strand diagram, and let $\ell \in \mathbb{N}$ be the number of leaves in $F_{1}$ and $F_{2}$. If $L$ is some forest with $\ell$ roots whose orientation is compatible with that of $F_{1}$ and $F_{2}$, we can create a new (not necessarily reduced) strand diagram $D(L)$ given by
\[D(L):=F_{1}^{*} \circ L^{*} \circ L \circ F_{2}.\]
\end{defn}
 
\begin{defn} \label{def: chain complex} Let $D=F_{1}^{*} \circ F_{2} \in \vec{\mathcal{R}^{\mu}_{\nu}}$ be a reduced oriented strand diagram, where $\mu$ is an $m$-sign and $\nu$ is an $n$-sign, and let $\ell$ be as above. We use $F(\ell)$ to refer to the set of all oriented forests with $\ell$ roots and orientations compatible with that of $F_{1},F_{2}$. Then we define
\[ C^{*}(D):=\bigoplus_{L \in F(\ell)}\mathcal{F}(\vec{T}(D(L))) \in \overline{\mathcal{K}}^{m}_{n},\]
where $\vec{T}$ is the functor defined in Section \ref{sec: orientability} and $\mathcal{F}(\vec{T}(D(L)))$ is the Khovanov chain complex.
\end{defn}

\begin{rem}\label{rem: chain complex}
The strand diagram $D(L)$ is equivalent to $D$; in fact $L^*\circ L$ may be reduced to the identity strand diagram using type I moves. Thus the chain complex $C^*(D)$ is assembled of Khovanov complexes of tangles corresponding to all ways of stabilizing $D$ using the type I move. The reason for this definition will be clear in the construction of the lax group action below; specifically, Proposition \ref{prop: tangle cobordism is independent of Type II move sequence} shows that the chain map corresponding to a composition of type II reductions is well-defined but this is not necessarily the case for a composition of both type I and type II reductions. Also note that the effect on the tangle $\vec{T}(D)$ of a stabilization by $L^*\circ L$ is an introduction of unknots which may be isotoped off of the tangle diagram for $\vec{T}(D)$ using Reidemeister II moves. Therefore up to chain homotopy, each summand 
$\mathcal{F}(\vec{T}(D(L)))$ is a direct sum of several copies of $\mathcal{F}(\vec{T}(D))$ with $q$-degree shifts.
\end{rem}

We now introduce the first bit of data required to determine a lax group action, that is, a family of functors $\{P_g\colon \overline{\mathcal{K}}^{2^{k}}_{n}\to \overline{\mathcal{K}}^{2^{k}}_{n}\mid g \in \vec{F}\}$.  

\begin{defn} \label{def: unitor of lax Thomospon group action}
Fix $k \geq 0, n \geq 0$. Let $A^{*},B^{*}$ be chain complexes in $\overline{\mathcal{K}}^{2^k}_{2^k}$, and let $f\colon A^{*} \to B^{*}$ be a chain map. For each $g \in \vec{F}$ define an  endofunctor $P_{g}$ of $\overline{\mathcal{K}}^{2^{k}}_{n}$ by 
 \[P_{g}(A^{*}):=C^{*}(\Theta_{k}(g)) \otimes_{H^{2^k}} A^{*},\] and \[P_{g}(f):={\rm Id} \otimes f\colon C^{*}(\Theta_{k}(g)) \otimes_{H^{2^k}} A^{*} \to C^{*}(\Theta_{k}(g)) \otimes_{H^{2^k}} B^{*}.\]
\end{defn}

Next we introduce the natural transformation $\lambda$ from the identity endofunctor 
of $\overline{\mathcal{K}}^{2^{k}}_{n}$
to $P_{e}$, the functor associated to the identity element $e \in \vec{F}$ as above. This consists of, for each $A^* \in \overline{\mathcal{K}}^{2^k}_{n}$, a chain map $\lambda_{A^*}\colon A^* \to C^{*}(\Theta_{k}(e))\otimes_{H^{2^k}} A^*$.

By definition, $C^{*}(\Theta_{k}(e)) = \bigoplus_{L \in F(2^k)} \mathcal{F}(\vec{T}(\Theta_k(e)(L)))$. Consider the direct summand given by letting $L$ be ${\rm Id}_{2^k}$, the identity $(2^k+, 2^k+)$-strand diagram. It is also true that $\Theta_{k}(e)={\rm Id}_{2^{k}}$. The functor $\vec{T}$ sends $\Theta_k(e)$ to the identity $(2^{k + 1} +, 2^{k + 1} +)$-tangle $\textup{Vert}_{2^{k+1}}$, by introducing an extra strand to the left of each component. Therefore
$$
\mathcal{F}(\vec{T}(\Theta_k(e)({\rm Id}_{2^k}))) = \mathcal{F}(\vec{T}(\Theta_k(e))) = \mathcal{F}(\textup{Vert}_{2^{k+1}})
$$
is a chain complex with a single homological grading. Note that $\mathcal{F}(\textup{Vert}_{2^{k+1}}) = H^{2^k}$ as $H^{2^k}$-bimodules \cite{khovanov}. We denote the unit in $H^{2^k}$ by $1_{2^k}$.

\begin{defn} For each $A^* \in \overline{\mathcal{K}}^{2^{k}}_{n}$ the natural transformation 
\[\lambda_{A^*}\colon A^* \to C^{*}(\Theta_{k} (e))\otimes_{H^{2^k}} A^* \] from the identity endofunctor to $P_e$ is given by tensoring with $1_{2^{k}}$, i.e. taking $A^*$ by the identity map to the direct summand $1_{2^{k}} \otimes A^*$. 
\end{defn}
In order to give a family of natural transformations $\{t_{h, g}\colon P_h \circ P_g \to P_{hg} \mid g, h \in \vec{F}\},$ we introduce some preliminary definitions and lemmas.

\begin{defn}
    Let the 2-category $\mathcal{D}$ have the same objects and 1-morphisms as $\vec{\mathfrak{D}}$. Define the 2-morphisms in $\mathcal{D}$ to be sequences of Type II moves (see Definition \ref{def: oriented reduction moves}).
\end{defn}

Recall Khovanov's $2$-category ${\mathcal C}$, mentioned at the end of Section \ref{subsec: Khovanov}. We extend the 1-functor $\vec{T}\colon \vec{\mathfrak{D}} \to \vec{\mathfrak{T}}$ to a 2-functor $\mathbb{T}: \mathcal{D} \to \mathcal{C}$, by keeping its action on objects and on 1-morphisms, and defining its action on 2-morphisms of $\mathcal{D}$ as follows. We let $\mathbb{T}$ send a positive Type II move to the movie shown in Figure \ref{fig: positive type II movie}, which is a saddle cobordism followed by a Reidemeister II move. Reversing orientations, we obtain the movie of a negative Type II move, see Figure \ref{fig: negative type II movie}.

\begin{figure}[ht]
    \centering
    \begin{subfigure}{\textwidth}
        \centering
        \includegraphics{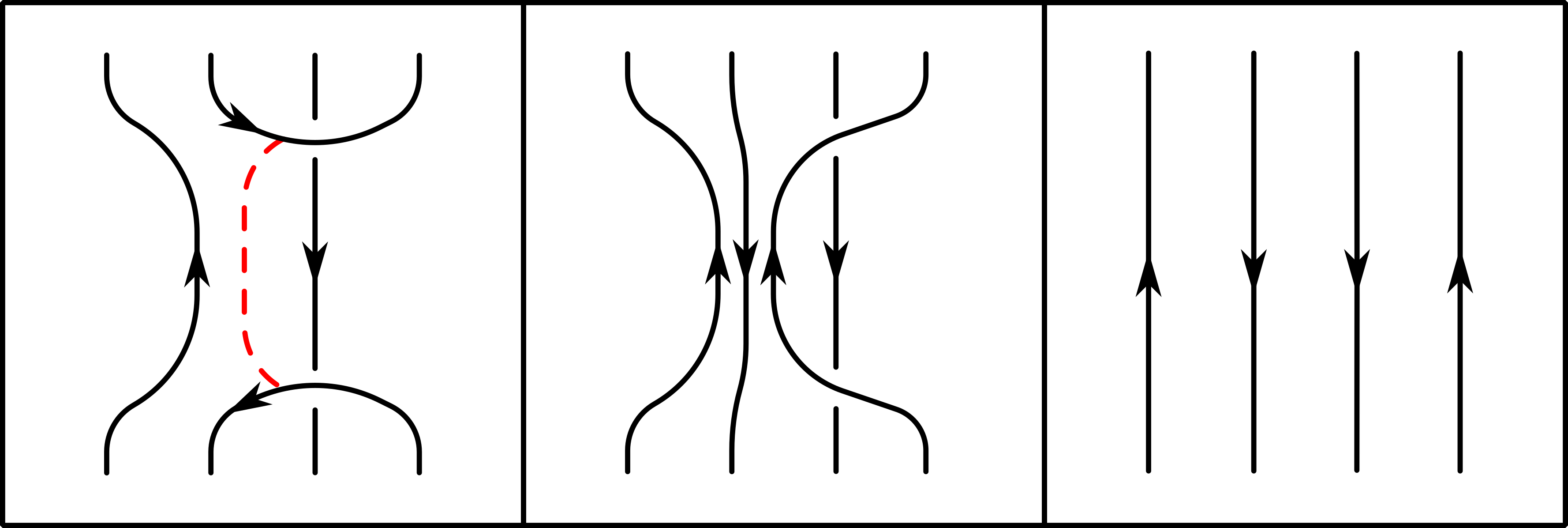}
        \caption{}
        \label{fig: positive type II movie}
    \end{subfigure}
    \vspace{3mm}
    \vfill
    \begin{subfigure}{\textwidth}
        \centering
        \includegraphics{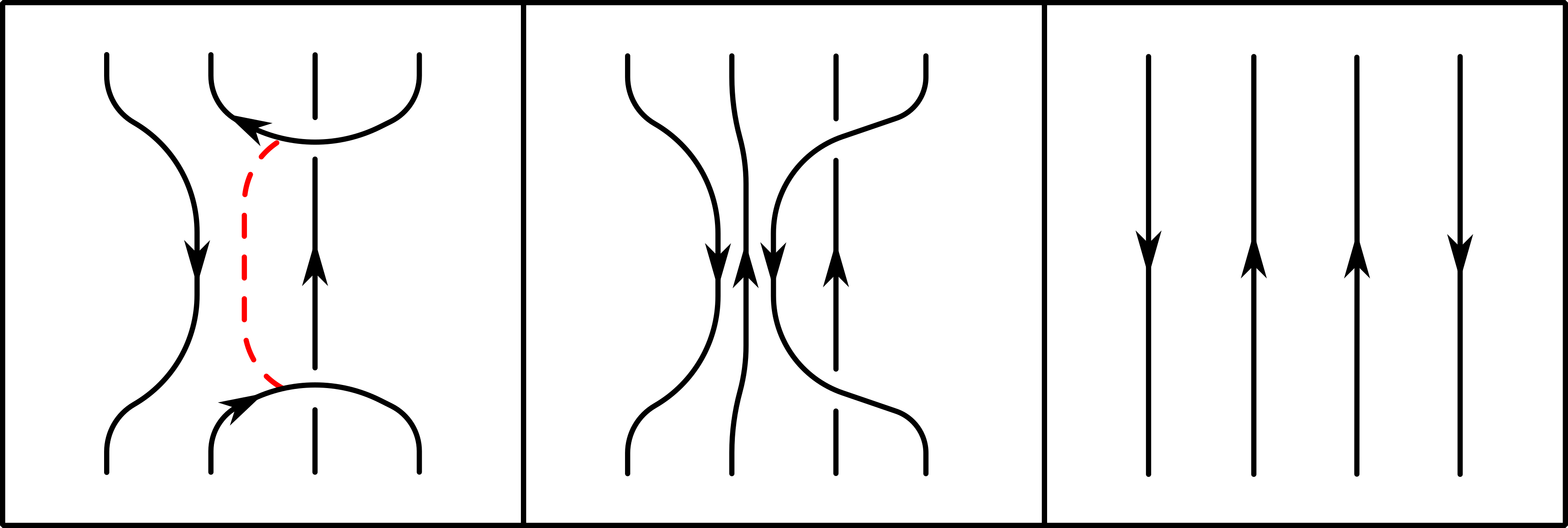}
        \caption{}
        \label{fig: negative type II movie}
    \end{subfigure}
    \caption{Movies of (A) positive Type II move. (B) negative Type II move.}
\end{figure}

\begin{defn}
    An oriented strand diagram is said to be {\em Type II-reduced} if it is not subject to any Type II moves.
\end{defn}

Given a $(\mu, \nu)$-strand diagram $\Gamma$, let $r$ denote a sequence of Type II moves taking $\Gamma$ to $\Gamma'$, where $\Gamma'$ is Type II-reduced. By an
argument similar to the proof of \cite[Proposition 2.1.1]{matucci}, $\Gamma'$ is unique. However the choice of a sequence $r$ from $\Gamma$ to $\Gamma'$ is not unique in general.
Note that each Type II move in $r$ corresponds to a pair of vertices, the endpoints of the edge which is removed.

\begin{prop}  \label{prop: tangle cobordism is independent of Type II move sequence}
    $\mathbb{T}(r)$ is independent of a choice of $r$.
\end{prop}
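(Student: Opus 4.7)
The plan is to prove confluence of the Type II rewriting system, now at the level of $2$-morphisms in Khovanov's $2$-category $\mathcal{C}$. Every $2$-morphism in $\mathcal{D}$ factors as a composite of single Type II moves, so it suffices to verify local confluence: given any two single moves $\mu_1, \mu_2$ both applicable to a strand diagram $\Gamma$, the composites $\mathbb{T}(\mu_2 \circ \mu_1)$ and $\mathbb{T}(\mu_1 \circ \mu_2)$ should agree in $\mathcal{C}$. Termination is immediate since every Type II move strictly reduces the number of edges, so Newman's lemma will then promote this to global confluence and yield the desired independence of $\mathbb{T}(r)$.

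The first step is a combinatorial observation: two distinct Type II reducible pairs in $\Gamma$ are necessarily vertex-disjoint. Indeed, a merge has a unique outgoing edge and a split a unique incoming edge, so specifying the merge (respectively the split) in a reducible pair determines the split (respectively the merge). Consequently, if both $\mu_1$ and $\mu_2$ are applicable to $\Gamma$, the vertices and the central shared edge of $\mu_2$ are not touched by $\mu_1$, so $\mu_2$ remains applicable after $\mu_1$, and symmetrically.

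Next I would verify the movie-level commutation. When the two pairs are fully edge-disjoint, the two movies $\mathbb{T}(\mu_i)$, each a saddle followed by a Reidemeister II, are supported in disjoint small disks of the tangle, and the swap $\mathbb{T}(\mu_2 \circ \mu_1) \simeq \mathbb{T}(\mu_1 \circ \mu_2)$ is an instance of the far-commutativity Carter--Saito movie moves recorded in \cite{khovanov2}. The remaining possibility is that some outputs of one pair's split coincide with inputs of the other pair's merge; an explicit local analysis of these finitely many configurations shows that the two composites differ by a short sequence of Carter--Saito movie moves obtained by sliding saddles past one another along the shared strands and interchanging the two Reidemeister II moves.

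The main obstacle is this edge-sharing case analysis, since one must carefully track the positions of the saddles and of the subsequent Reidemeister II moves along each shared strand, and verify that the rearrangement is realized by Carter--Saito moves rather than by a non-trivial cobordism. The saving grace is that in every such configuration the interaction between the two bigons is mediated by only one or two isolating strands, so the rearrangement reduces to the standard movie-move equivalence between two commuting saddles performed along common arcs. With local confluence in $\mathcal{C}$ established, Newman's lemma completes the proof.
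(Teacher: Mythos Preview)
Your approach via local confluence and Newman's lemma is correct and genuinely different from the paper's. The paper instead gives a \emph{global} characterization: it shows that the Type II moves appearing in \emph{any} full reduction sequence of $\Gamma$ are in canonical bijection with certain ``admissible paths'' in $\Gamma$ --- monotone paths whose merge/split labels spell a trivial word in the free group on $\{l,r\}$ and whose associated weight function is a Dyck path. Since every reduction sequence therefore uses the same underlying set of elementary cobordisms, the paper finishes by observing that two admissible paths are always disjoint, nested, or overlapping (never interlaced), so the corresponding saddle maps either commute or have a forced order. Your argument is more elementary and sidesteps this structural analysis entirely: you only ever compare two moves applicable at the \emph{same} stage, and your vertex-disjointness observation (a merge determines its split and vice versa) handles the combinatorics cleanly, after which the induction underlying Newman's lemma does the rest. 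What the paper's route buys is the explicit description of exactly which vertex pairs of the original $\Gamma$ eventually cancel, together with the Dyck-path interpretation. One minor remark: your edge-sharing case is probably less delicate than you fear --- even when an output of $s_1$ feeds an input of $m_2$, the saddle and Reidemeister~II for $\mu_1$ live in a disk around the $m_1,s_1$ crossings disjoint from the disk around the $m_2,s_2$ crossings, so this is still far-commutativity. The paper is comparably terse here, simply asserting that saddles along disjoint admissible paths commute.
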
 

\begin{proof}
To prove the statement, we will characterize pairs of vertices in $\Gamma$ encoding the Type II moves in a sequence reducing $\Gamma$ to $\Gamma'$. To this end, consider paths (geodesics) in $\Gamma$ which are monotonic with respect to the height function, parametrized so that the height is a decreasing function of the parameter. 
It is convenient to consider paths which start and end at mid-points of edges of the strand diagram.
Any such path $\gamma$ follows a sequence of merges and splits. In more detail, consider a sequence of labels corresponding to each path $\gamma$: label it with $l$ (respectively $r$) whenever it enters a merge from the left (respectively from the right), and similarly label it with $l^{-1}$, $r^{-1}$ whenever it exits a split  on the left or on the right. 
For example, the path shown in Figure \ref{fig: admissible path} is labeled $lll^{-1}rr^{-1} l^{-1}$. Consider the free group $F_{l,r}$ generated by $l,r$; then the sequence of labels may be thought of as a word $w_{\gamma}$ in the generators.
\begin{figure}[ht]
\begin{center}
\includegraphics[height=5cm]{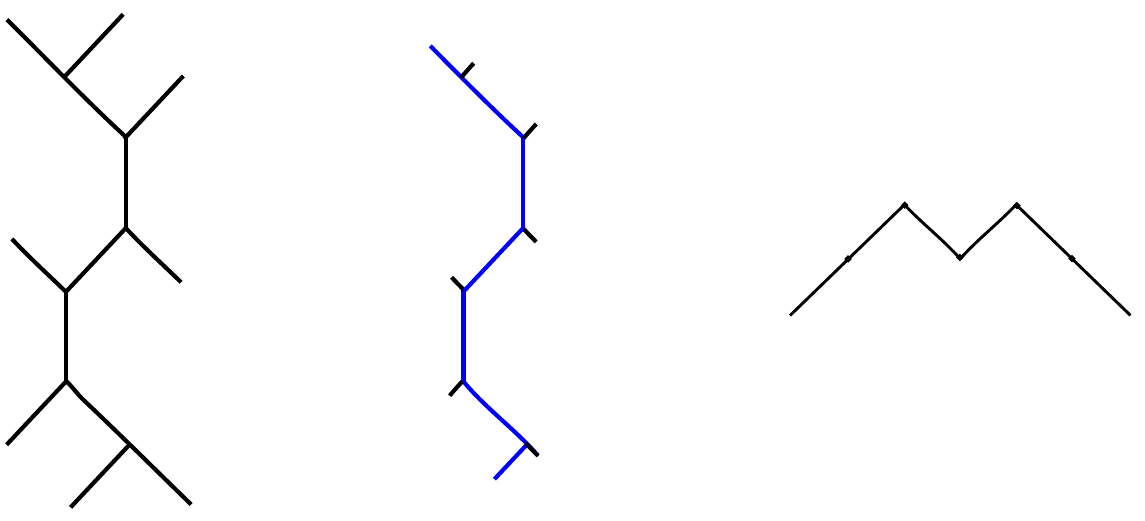}
{\scriptsize
\put(-195,117){$l$}
\put(-178,100){$l$}
\put(-184,80){$l^{-1}$}
\put(-180,60){$r$}
\put(-180,40){$r^{-1}$}
\put(-190,17){$l^{-1}$}
}
\caption{Left: a part of a strand diagram. Center: an admissible path $\gamma$, colored blue, with $w_{\gamma}=lll^{-1}rr^{-1} l^{-1}$. Right: the weight function corresponding to  $\gamma$.}\label{fig: admissible path}
\end{center}
\end{figure}

Now define integer weights labeling $\gamma$ as it passes mid-points of edges of the strand diagram. The beginning point of $\gamma$ is assigned weight $0$. The weight function is given by the augmentation of the word $w_{\gamma}$, that is the sum of exponents of the letters along the path, from the start to the given point. The weights may also be characterized by requiring that passing a merge adds $1$, passing a split split subtracts $1$.
Type II moves in a sequence $r$ reducing $\Gamma$ to $\Gamma'$ are in bijection with pairs of vertices in $\Gamma$ corresponding to {\em admissible} paths $\gamma$ satisfying the following properties:
\begin{enumerate}
    \item The exponent of the first letter in $w_{\gamma}$ is $+1$,
    \item The word $w_{\gamma}$ represents the trivial element of the free group $F_{l,r}$,
    \item The endpoint of $\gamma$ has weight $0$,
    \item The weights between the beginning and the end of $\gamma$ are strictly positive.
\end{enumerate}
An example of an admissible path is shown in Figure \ref{fig: admissible path}. The pair of vertices (specifying a type II move) corresponding to $\gamma$ are the first and the last vertices of $\Gamma$ it passes through, which are a merge and a split respectively, by properties (1), (3) and (4). Condition (2) corresponds to the fact that the ``vertical'' center interval (as shown in Figure \ref{fig: moves}) of each type II move in a sequence reducing $\Gamma$ to $\Gamma'$ is a result of previous type II moves in the sequence.

It follows from conditions (3), (4) that two admissible paths ${\gamma}_1, \gamma_2$ cannot be interlaced, meaning that ${\gamma}_1\cap \gamma_2$ contains precisely one endpoint of both ${\gamma}_1$ and $\gamma_2$.
In other words, any two admissible paths 
are either (a) disjoint, (b) nested, say ${\gamma}_1$ is in the interior of $\gamma_2$, or (c) they overlap, meaning that the intersection ${\gamma}_1\cap  \gamma_2$ is an interval contained in the interior of ${\gamma}_1$ and in the interior of $\gamma_2$. 
The saddle maps on Khovanov chain complexes arising from Type II moves along disjoint paths commute. The case (b) corresponds to the $\gamma_1$ move being done before the $\gamma_2$ move. In the case (c), a preceding move along $\gamma_1\cap \gamma_2$ makes $\gamma_1$  and $\gamma_2$ disjoint. In any of these case, either the order of moves is well-defined, or they commute, showing that the map on Khovanov chain complexes is independent of a choice of $r$. 
\end{proof}

\begin{rem}
The piece-wise linear graphs of admissible weight functions along admissible paths are precisely {\em Dyck paths} studied in combinatorics and statistical mechanics.
\end{rem}

Suppose $\Gamma$ is a Type II-reduced $(\sigma, \mu)$-strand diagram and $\Lambda$ is a Type II-reduced $(\mu, \rho)$-strand diagram. Let $\Lambda \cdot \Gamma$ denote the Type II-reduced $(\sigma, \rho)$-strand diagram corresponding to $\Lambda \circ \Gamma$. Note that only type II moves are used here, and in general $\Lambda \cdot \Gamma$ is different from the reduced strand diagram $\Lambda * \Gamma$ defined in Section \ref{sec:strand diagrams}, where both types I and II moves are used. Let $r$ be a sequence of type II moves reducing $\Lambda \circ \Gamma$ to $\Lambda \cdot \Gamma$.
Khovanov's functor $\mathcal{F}$ gives a chain map
$$
\mathcal{F}(\mathbb{T}(r))\colon \mathcal{F}(\mathbb{T}(\Lambda \circ\Gamma)) \to \mathcal{F}(\mathbb{T}(\Lambda \cdot \Gamma)).
$$

By \cite[Proposition 13]{khovanov2}, there is an isomorphism $\psi: \mathcal{F}(\mathbb{T}(\Lambda)) \otimes_{H^m} \mathcal{F}(\mathbb{T}(\Gamma))\to \mathcal{F}(\mathbb{T}(\Lambda) \circ \mathbb{T}(\Gamma)) = \mathcal{F}(\mathbb{T}(\Lambda \circ \Gamma))$. 

\begin{defn} Let $\Gamma,\Lambda$ be as above. Define the map
\begin{equation}
    \phi(\Gamma, \Lambda) := \mathcal{F}(\mathbb{T}(r)) \circ \psi: \mathcal{F}(\mathbb{T}(\Lambda)) \otimes_{H^m} \mathcal{F}(\mathbb{T}(\Gamma)) \to \mathcal{F}(\mathbb{T}(\Lambda \cdot \Gamma))
\end{equation}

By Proposition \ref{prop: tangle cobordism is independent of Type II move sequence}, $\mathbb{T}(r)$ is independent of a choice of $r$, so $\phi(\Gamma, \Lambda)$ is well defined.
\end{defn}

\vspace{5mm}

Let $g, h$ be any two elements in $\vec{F}$. Considering $\Theta_k(g), \Theta_k(h)$ and $\Theta_k(hg)$ as pairs of forests, let $u, v$ and $w$ denote their respective numbers of leaves.

\begin{lem}
   For any $U \in F(u)$ and $V \in F(v)$, there exists $W \in F(w)$ such that  \[\Theta_k(h)(V) \cdot \Theta_k(g)(U) = \Theta_k(hg)(W). \]
\end{lem}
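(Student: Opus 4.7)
The plan is to reframe the composition via the ``grouped'' oriented forests. Writing $\Theta_k(g) = A^* \circ B$ and $\Theta_k(h) = C^* \circ D$ with $A,B$ (resp.\ $C,D$) oriented forests having $u$ (resp.\ $v$) leaves, set $\tilde A := U \circ A$, $\tilde B := U \circ B$, $\tilde C := V \circ C$, $\tilde D := V \circ D$. Since $(U \circ A)^* = A^* \circ U^*$, one has $\Theta_k(g)(U) = \tilde A^* \circ \tilde B$ and $\Theta_k(h)(V) = \tilde C^* \circ \tilde D$, so
\[ \Theta_k(h)(V) \circ \Theta_k(g)(U) = \tilde C^* \circ \tilde D \circ \tilde A^* \circ \tilde B, \]
which has the same layered shape as the unstabilized composition $\Theta_k(h) \circ \Theta_k(g)$ (a forest, an inverse forest, a forest, and an inverse forest, read top to bottom).

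Next I would pin down where Type II moves can occur. A Type II configuration requires a merge directly above a split, and the only such interface in the displayed composition is between $\tilde A^*$ and $\tilde D$, so every Type II reduction is confined to the middle biforest $\tilde D \circ \tilde A^*$. At each position $i \in \{1,\dots,2^k\}$ the $i$-th trees of $\tilde A$ and $\tilde D$ share a maximal common subtree rooted at the root, which successive (cascading) Type II moves cancel completely; at each leaf of this common subtree, maximality forces at most one of the two sides to carry further structure, and what remains is either a pure merge-subtree or a pure split-subtree. Thus surviving merges and splits occupy disjoint horizontal positions in the residual $\tilde M$, and planarity precludes any Type I configuration there as well. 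Consequently $\tilde M$ is fully reduced and may be written as $\tilde M = \tilde M_1^* \circ \tilde M_2$ for oriented forests $\tilde M_1, \tilde M_2$. Substituting back,
\[ \tilde C^* \circ \tilde M \circ \tilde B = (\tilde M_1 \circ \tilde C)^* \circ (\tilde M_2 \circ \tilde B), \]
whose only remaining interfaces are splits-above-splits, splits-above-merges, and merges-above-merges, none of which supports a Type II configuration; hence this is the Type II-reduced form of the original composition.

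To conclude, write $\Theta_k(hg) = E^* \circ G$ for the reduced pair of oriented forests $(E,G)$ with $w$ leaves. The pair $(\tilde M_1 \circ \tilde C, \tilde M_2 \circ \tilde B)$ is a (generally non-reduced) pair-of-forests representative of $hg \in \vec F$, and the standard uniqueness of reduced pairs in Thompson's group says that every such representative is obtained from the reduced one by a common caret-stabilization. This produces an oriented forest $W \in F(w)$ satisfying $\tilde M_1 \circ \tilde C = W \circ E$ and $\tilde M_2 \circ \tilde B = W \circ G$, whence
\[ \Theta_k(h)(V) \cdot \Theta_k(g)(U) = (W \circ E)^* \circ (W \circ G) = E^* \circ W^* \circ W \circ G = \Theta_k(hg)(W), \]
which is the desired identity. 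The main technical obstacle is to confirm that the Type II-reduced middle biforest is already fully reduced; this rests on the residual analysis above together with planarity ruling out stray Type I configurations, after which the existence of $W$ is immediate.
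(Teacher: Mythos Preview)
Your argument is correct and reaches the same conclusion as the paper, but by a genuinely different route. The paper argues abstractly: since $\Theta_k(h)(V)\cdot\Theta_k(g)(U)$ is Type~II-reduced and equivalent to the reduced diagram $\Theta_k(hg)=E_2^*\circ E_1$, one can pass between them using only Type~I moves (this uses that a Type~I move applied to a Type~II-reduced diagram cannot create a Type~II configuration). Reversing direction, the Type~II-reduced diagram is obtained from $E_2^*\circ E_1$ by inserting carets, and a short obstruction argument shows any caret off a bridge edge would produce a Type~II configuration, forcing all carets onto bridges and hence yielding the form $E_2^*\circ W^*\circ W\circ E_1$.

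You instead work constructively: you analyze the Type~II reduction of the middle block $\tilde D\circ\tilde A^*$ via the maximal common subtree, observe that each residual component is a pure merge-subtree or a pure split-subtree lying in disjoint columns, and use this to isotope $\tilde M$ into the form $\tilde M_1^*\circ\tilde M_2$; the whole diagram then becomes a pair of forests, and Thompson-group uniqueness of reduced pairs produces $W$. One point to make explicit in your write-up: the decomposition $\tilde M=\tilde M_1^*\circ\tilde M_2$ requires the residual \emph{splits} (which come from $\tilde D$, hence sit below) to be placed \emph{above} the residual merges (from $\tilde A^*$, sitting above); it is precisely your ``disjoint horizontal positions'' observation that licenses this vertical swap via a planar isotopy, and this step deserves a sentence rather than being folded into the phrase ``fully reduced.'' With that clarification, your approach trades the paper's short obstruction lemma for a direct structural picture of the reduction, which is arguably more informative about \emph{which} $W$ arises.
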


\begin{proof}
    Letting $\sim$ denote the equivalence of strand diagrams, notice that \[\Theta_k(h)(V) \cdot \Theta_k(g)(U) \sim  \Theta_k(h)(V) \circ  \Theta_k(g)(U) \sim \Theta_k(h) \circ \Theta_k(g) \sim  \Theta_k(h) *  \Theta_k(g) =  \Theta_k(hg).\] Therefore $\Theta_k(h)(V) \cdot \Theta_k(g)(U)$ is a Type II-reduced strand diagram equivalent to the reduced strand diagram $ \Theta_k(hg)$. Next, observe that the type II-reduced strand diagram $ \Theta_k(h)(V) \cdot \Theta_k(g)(U)$ can be fully reduced by Type I moves.
    Said differently, $ \Theta_k(h)(V) \cdot \Theta_k(g)(U)$ can be obtained by a sequence of reversed Type I moves (inserting carets) from $\Theta_k(hg)$. 

    Moreover, as we explain next, these carets can only be put on certain edges. We know from the fact that $\Theta_{k}(hg)$ is reduced that $\Theta_k(hg) = E_2^* * E_1$ for some forests $E_1$ and $E_2$. We say an edge in $\Theta_k(hg)$ is a {\em bridge} if a concatenation point between $E^*_2$ and $E_1$ is on this edge. If a Type II-reduced graph differs from $E^{*}_{2} * E_{1}$ by carets, it can only be obtained from $E^{*}_{2} * E_{1}$ by inserting carets on \textit{bridges}, or on new bridges created by other previously inserted carets. Putting carets elsewhere creates a diagram that is not Type II-reduced, as such carets can always be cancelled in a Type II move. 
    
    The process of putting carets on bridges is equivalent to the process of inserting $W^*\circ W$ between $E^{*}_{2}$ and $E_{1}$, for some forest $W$. Therefore, $$\Theta_k(h)(V) \cdot \Theta_k(g)(U) = E_2^* \circ W^* \circ W \circ E_1 = \Theta_k(hg)(W)$$.
\end{proof}

\begin{defn} Given $g,h \in \vec{F}$, define
\begin{equation}
    \phi(h, g) = \bigoplus_{U\in F(u), V \in F(v)} \phi(\Theta_k(h)(V), \Theta_k(g)(U)),
\end{equation}
a chain map from \[C^*(\Theta_k(h)) \otimes_{H^{2^k}} C^*(\Theta_k(g)) = \bigoplus_{U \in F(u), V \in F(v)} \mathcal{F}(\mathbb{T}(\Theta_k(h)(V))) \otimes_{H^{2^k}} \mathcal{F}(\mathbb{T}(\Theta_k(g)(U)))\] to \[C^*(\Theta_k(hg)) = \bigoplus_{W \in F(w)} \mathcal{F}(\mathbb{T}(\Theta_k(hg)(W))).\]

Then for any $A^* \in \overline{\mathcal{K}}^{2^k}_n$, define
\begin{equation}
    t_{h, g}(A^*) = \phi(h, g) \otimes id
\end{equation}

from $P_h \circ P_g(A^*) = C^*(\Theta_k(h)) \otimes_{H^{2^k}} C^*(\Theta_k(g)) \otimes_{H^{2^k}} A^*$ to $P_{hg}(A^*) = C^*(\Theta_k(hg)) \otimes_{H^{2^k}} A^*$.
\end{defn}
\begin{thm} \label{thm: action}
    The data $(\{P_g\}_{g \in \vec{F}}, \lambda, \{t_{h, g}\}_{g, h \in \vec{F}})$ determines a lax action of $\vec{F}$ on $\overline{\mathcal{K}}^{2^k}_n$.
\end{thm}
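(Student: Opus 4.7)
My plan is to verify in turn the functoriality, naturality, unitality, and associativity conditions implicit in Definition \ref{defn: lax group action}. Functoriality of each $P_g$ is immediate because $(-) \otimes_{H^{2^k}} C^*(\Theta_k(g))$ defines a functor on the homotopy category. Naturality of $\lambda$ and of each $t_{h,g}$ in the object $A^*$ is likewise built into the definitions, since both are of the form (fixed bimodule-complex map) $\otimes\, {\rm id}_{A^*}$ with the first factor independent of $A^*$; the naturality squares thus commute on the nose.

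For the unitality axiom $t_{e,g}(A^*) \circ \lambda_{P_g(A^*)} = {\rm Id}_{P_g(A^*)}$, I observe that $\lambda_{P_g(A^*)}$ embeds $P_g(A^*)$ as the summand of $C^*(\Theta_k(e)) \otimes_{H^{2^k}} P_g(A^*)$ indexed by $L = {\rm Id}_{2^k}$. Since $\Theta_k(e) = {\rm Id}_{2^k}$, the concatenation ${\rm Id}_{2^k} \circ \Theta_k(g)(U)$ is already Type II-reduced for every $U$, so the sequence $r$ appearing in the definition of $\phi$ is empty and $\phi$ reduces to Khovanov's canonical isomorphism $\psi$; tensoring $\psi$ against $\mathcal{F}(\textup{Vert}_{2^{k+1}}) = H^{2^k}$ over $H^{2^k}$ yields the identity. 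The axiom $t_{g,e}(A^*) \circ P_g(\lambda_{A^*}) = {\rm Id}_{P_g(A^*)}$ is handled symmetrically, using the identity on the other side.

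For the associativity coherence I restrict to a single summand
\[
\mathcal{F}(\mathbb{T}(\Theta_k(k)(W))) \otimes_{H^{2^k}} \mathcal{F}(\mathbb{T}(\Theta_k(h)(V))) \otimes_{H^{2^k}} \mathcal{F}(\mathbb{T}(\Theta_k(g)(U))) \otimes_{H^{2^k}} A^*
\]
of $P_k \circ P_h \circ P_g(A^*)$, for any choice of compatible forests $W, V, U$. Both composites in the hexagon factor as an iterated Khovanov isomorphism $\psi$ identifying this tensor product with $\mathcal{F}(\mathbb{T}(\Theta_k(k)(W) \circ \Theta_k(h)(V) \circ \Theta_k(g)(U))) \otimes_{H^{2^k}} A^*$, followed by $\mathcal{F}$ applied to a sequence of Type II moves that reduces the triple concatenation to $\Theta_k(khg)(X)$ for some forest $X$ determined by $(W, V, U)$. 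The left path uses a sequence that first reduces the $(k,h)$-part and then composes with $g$, while the right path first reduces the $(h,g)$-part and then composes with $k$. These are two sequences of Type II moves with the same source and target, so Proposition \ref{prop: tangle cobordism is independent of Type II move sequence} forces them to induce the same chain map; the coherence of the $\psi$ required to match the two associativity brackets is the 2-functoriality of $\mathcal{F}\colon \mathcal{C} \to \widehat{\mathbb{K}}$ from \cite{khovanov2}.

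The main obstacle lies in the bookkeeping for the associativity step: one must verify that for each triple $(W, V, U)$ the output forest $X$ coincides on the two paths, and that the block-diagonal decomposition into summands $\mathcal{F}(\mathbb{T}(\cdots))$ is preserved under successive $\phi$ applications, so that the two composites can be compared summand-wise. Once these identifications are fixed, the equality of the two composite chain maps reduces to a single invocation of Proposition \ref{prop: tangle cobordism is independent of Type II move sequence}.
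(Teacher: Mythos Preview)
Your proposal is correct and follows essentially the same route as the paper: the unitality axiom reduces to the observation that concatenation with $\Theta_k(e)={\rm Id}_{2^k}$ requires no Type II moves so $\phi$ collapses to Khovanov's $\psi$, and the associativity axiom is checked summand-by-summand by a single invocation of Proposition~\ref{prop: tangle cobordism is independent of Type II move sequence}, using uniqueness of the Type II-reduced target. One cosmetic point: in your associativity paragraph you use $k$ both for the fixed exponent in $2^k$ and for a third group element; rename the latter (the paper uses $f,g,h$).
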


Next, we verify that the collection of functors $\{P_g\}_{g \in \vec{F}}$, the natural transformation $\lambda$, and the collection of natural transformations $\{t_{h, g}\}_{g, h \in \vec{F}}$, satisfy the necessary properties outlined after Definition \ref{defn: lax group action}.

\subsection{Verifying the axioms} 

To check the first axiom of a lax group action, we need the following lemma.

\begin{lem}
    For any $g \in \vec{F}$ and any chain $u$ in $C^*(\Theta_k(g))$, we have $(\phi(g, e))(u \otimes 1^{2^k}) = u$ and $(\phi(e, g))(1^{2^k} \otimes u) = u$.
\end{lem}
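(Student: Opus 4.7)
The plan is to verify each of the two identities by reducing to a direct computation on a single summand of the direct-sum decompositions defining $C^*(\Theta_k(e))$ and $C^*(\Theta_k(g))$, and then invoking the unit property of the canonical isomorphism $\psi$ from Khovanov's tensor product construction.

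First, observe that $\Theta_k(e) = \mathrm{Id}_{2^k}$, the identity $(2^k, 2^k)$-strand diagram. Its canonical biforest decomposition $F_1^* \circ F_2$ has $F_1 = F_2 = \mathrm{Id}_{2^k}$, so $\ell = 2^k$. The summand of $C^*(\Theta_k(e))$ indexed by $L = \mathrm{Id}_{2^k}$ equals $\mathcal{F}(\vec T(\mathrm{Id}_{2^k})) = \mathcal{F}(\textup{Vert}_{2^{k+1}}) = H^{2^k}$, and by the definition of $\lambda$ the element $1_{2^k}$ lies entirely in this summand. Decomposing any $u \in C^*(\Theta_k(g))$ as $u = \sum_U u_U$ with $u_U \in \mathcal{F}(\mathbb{T}(\Theta_k(g)(U)))$ and using additivity, it suffices to prove the identities on each summand separately.

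Next, for each fixed $U$, the map $\phi(e,g)$ restricted to the relevant summand is $\mathcal{F}(\mathbb{T}(r)) \circ \psi$, where $r$ is a sequence of Type II moves reducing the concatenation of $\mathrm{Id}_{2^k}$ with $\Theta_k(g)(U)$ to its Type II-reduced form. The key point is that this concatenation equals $\Theta_k(g)(U)$ itself, and $\Theta_k(g)(U) = F_1^* \circ U^* \circ U \circ F_2$ is already Type II-reduced: reading top to bottom it consists of splits (from $F_2$ and $U$) above merges (from $U^*$ and $F_1^*$), so no merge-above-split configuration exists. Hence $r$ is empty and $\mathcal{F}(\mathbb{T}(r))$ is the identity chain map. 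The argument for $\phi(g, e)(u_U \otimes 1_{2^k}) = u_U$ is entirely symmetric, with the roles of top and bottom exchanged.

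The main step remaining is to verify that the isomorphism $\psi$ from \cite[Proposition 13]{khovanov2}, restricted to elements of the form $1_{2^k} \otimes (-)$ or $(-) \otimes 1_{2^k}$, coincides with the canonical isomorphisms $H^{2^k} \otimes_{H^{2^k}} M \xrightarrow{\cong} M$ and $M \otimes_{H^{2^k}} H^{2^k} \xrightarrow{\cong} M$. This is the primary obstacle: it requires unpacking Khovanov's definition of $\psi$ far enough to identify the action of $1_{2^k}$ as the unit of the bimodule $\mathcal{F}(\textup{Vert}_{2^{k+1}}) = H^{2^k}$. This in turn follows from the naturality of the tensor product construction under concatenation with the identity tangle. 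Once this unit property is established, both identities of the lemma drop out immediately from the computations in the previous paragraph.
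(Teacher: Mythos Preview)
Your proof is correct and follows essentially the same route as the paper's: reduce to a single summand, observe that concatenating with $\Theta_k(e)=\mathrm{Id}_{2^k}$ leaves the (already Type~II-reduced) diagram $\Theta_k(g)(U)$ unchanged so that $r$ is empty and $\phi=\psi$, and then identify $\psi(-\otimes 1_{2^k})$ (resp.\ $\psi(1_{2^k}\otimes -)$) with the right (resp.\ left) $H^{2^k}$-module action by the unit. Your explicit justification that $\Theta_k(g)(U)=F_1^*\circ U^*\circ U\circ F_2$ is Type~II-reduced (splits above merges) is a nice addition the paper leaves implicit, and your characterization of the $\psi$ step as the ``primary obstacle'' is accurate---the paper resolves it with the same one-line observation that $\psi$ is the bimodule action.
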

\begin{proof}
    We will prove the first equation; a similar argument proves the second equation.

    By definition, $C^*(\Theta_k(g)) = \bigoplus_{L \in F(l)} \mathcal{F}(\mathbb{T}(\Theta_k(g)(L)))$, so we can assume $u \in \mathcal{F}(\mathbb{T}(\Theta_k(g)(L)))$ for some $L \in F(l)$ and check that $\phi(\Theta_k(g)(L), \Theta_k(e))(u \otimes 1^{2^k}) = u$.

    Notice that $\Theta_k(e)$ is a trivial strand diagram, so $\Theta_k(g)(L) \circ \Theta_k(e) = \Theta_k(g)(L)$ is Type II-reduced, i.e. the Type II move sequence $r$ from $\Theta_k(g)(L) \circ \Theta_k(e)$ to $\Theta_k(g)(L) \cdot \Theta_k(e)$ is empty. Thus $\mathcal{F}(\mathbb{T}(r))$ is identity map and
    $$
    \phi(\Theta_k(g)(L), \Theta_k(e)) = \mathcal{F}(\mathbb{T}(r)) \circ \psi = \psi
    $$

    Here $\psi$ is the isomorphism from $\mathcal{F}(\mathbb{T}(\Theta_k(g)(L))) \otimes_{H^{2^k}} \mathcal{F}(\mathbb{T}(\Theta_k(e))$ to $\mathcal{F}(\mathbb{T}(\Theta_k(g)(L) \circ \Theta_k(e))) = \mathcal{F}(\mathbb{T}(\Theta_k(g)(L)))$. To see $\psi(u \otimes 1^{2^k}) = u$, we just need to observe that $\psi$ is equivalent to the right $H^{2^k}$-action on chain modules of $\mathcal{F}(\mathbb{T}(\Theta_k(g)(L)))$. So the unit $1^{2^k}$ acts trivially.
\end{proof}

Using the above Lemma we may verify the first axiom. For any $u \otimes a \in P_g(A^*)$, we have
\begin{align*}
    &(t_{e, g}(A^*) \circ \lambda_{P_g(A^*)})(u \otimes a) = t_{e, g}(A^*)(1^{2^k} \otimes u \otimes a) = \phi(e, g)(1^{2^k} \otimes u) \otimes a= u \otimes a \\
    &(t_{g, e}(A^*) \circ P_g(\lambda_{A^*}))(u \otimes a) = t_{g, e}(A^*)(u \otimes 1^{2^k} \otimes a) = \phi(g, e)(u \otimes 1^{2^k}) \otimes a = u \otimes a
\end{align*}

So $t_{e, g}(A^*) \circ \lambda_{P_g(A^*)} = {\rm Id}_{P_g(A^*)}$ and $t_{g, e}(A^*) \circ P_g(\lambda_{A^*}) = {\rm Id}_{P_g(A^*)}$.

\vspace{0.5cm}

For the associativity axiom, we need to check that the following diagram commutes for any $f, g, h \in \vec{F}$.

\[\begin{tikzcd}
	{C^*(\Theta_k(h)) \otimes_{H^{2^k}} C^*(\Theta_k(g)) \otimes_{H^{2^k}} C^*(\Theta_k(f))} & {C^*(\Theta_k(h)) \otimes_{H^{2^k}} C^*(\Theta_k(gf))} \\
	{ C^*(\Theta_k(hg)) \otimes_{H^{2^k}} C^*(\Theta_k(f))} & {C^*(\Theta_k(hgf))}
	\arrow["{id \otimes \phi(g, f)}", from=1-1, to=1-2]
	\arrow["{\phi(h, g) \otimes id}"', from=1-1, to=2-1]
	\arrow["{\phi(hg, f)}"', from=2-1, to=2-2]
	\arrow["{\phi(h, gf)}", from=1-2, to=2-2]
\end{tikzcd}\]

Recall that $C^*(\Theta_k(g))$ is defined to be a direct sum of $\mathcal{F}(\mathbb{T}(\Theta_k(g)(L)))$ over forests $L$, where $\Theta_k(g)(L)$ is obtained by inserting $L^* L$ in the middle of $\Theta_k(g)$. The commutativity of the above diagram is therefore equivalent to the commutativity of the following diagram for any $ \Gamma, \Lambda, \Pi$ obtained by inserting forests in the middle of $\Theta_k(f), \Theta_k(g), \Theta_k(h)$ respectively.

\[\begin{tikzcd}
	{\mathcal{F}(\mathbb{T}(\Pi)) \otimes_{H^{2^k}} \mathcal{F}(\mathbb{T}(\Lambda)) \otimes_{H^{2^k}} \mathcal{F}(\mathbb{T}(\Gamma))} & {\mathcal{F}(\mathbb{T}(\Pi)) \otimes_{H^{2^k}} \mathcal{F}(\mathbb{T}(\Lambda \cdot \Gamma))} \\
	{\mathcal{F}(\mathbb{T}(\Pi \cdot \Lambda)) \otimes_{H^{2^k}} \mathcal{F}(\mathbb{T}(\Gamma))} & {\mathcal{F}(\mathbb{T}(\Pi \cdot \Lambda \cdot \Gamma))}
	\arrow["{id \otimes \phi(\Lambda, \Gamma)}", from=1-1, to=1-2]
	\arrow["{\phi(\Pi, \Lambda) \otimes id}"', from=1-1, to=2-1]
	\arrow["{\phi(\Pi \cdot \Lambda, \Gamma)}"', from=2-1, to=2-2]
	\arrow["{\phi(\Pi, \Lambda \cdot \Gamma)}", from=1-2, to=2-2]
\end{tikzcd}\]

In the above diagram, we choose a Type II move sequence $p$ from $\Pi \circ \Lambda$ to $\Pi \cdot \Lambda$, a sequence $q$ from $(\Pi \cdot \Lambda) \circ \Gamma$ to $\Pi \cdot \Lambda \cdot \Gamma$, a sequence $r$ from $\Lambda \circ \Gamma$ to $\Lambda \cdot \Gamma$, a sequence $s$ from $\Pi \circ (\Lambda \cdot \Gamma)$ to $\Pi \cdot \Lambda \cdot \Gamma$, then let  $\phi(\Pi, \Lambda) = \mathcal{F}(\mathbb{T}(p))$, $\phi(\Pi \cdot \Lambda, \Gamma) = \mathcal{F}(\mathbb{T}(q))$, $\phi(\Lambda, \Gamma) = \mathcal{F}(\mathbb{T}(r))$, $\phi(\Pi, \Lambda \cdot \Gamma) = \mathcal{F}(\mathbb{T}(s))$.

If we regard $p$ as a sequence of Type II moves from $\Pi \circ \Lambda \circ \Gamma$ to $(\Pi \cdot \Lambda) \circ \Gamma$ and $r$ as a sequence of Type II moves from $\Pi \circ\Lambda \circ \Gamma$ to   $\Pi \circ (\Lambda \cdot \Gamma)$, then we can rewrite the diagram as

\[\begin{tikzcd}
	& {\mathcal{F}(\mathbb{T}(\Pi \circ \Lambda \circ \Gamma))} \\
	{\mathcal{F}(\mathbb{T}((\Pi \cdot \Lambda) \circ \Gamma))} && {\mathcal{F}(\mathbb{T}(\Pi \circ (\Lambda \cdot \Gamma)))} \\
	& {\mathcal{F}(\mathbb{T}(\Pi \cdot \Lambda \cdot \Gamma))}
	\arrow["{\mathcal{F}(\mathbb{T}(p))}"', from=1-2, to=2-1]
	\arrow["{\mathcal{F}(\mathbb{T}(q))}"', from=2-1, to=3-2]
	\arrow["{\mathcal{F}(\mathbb{T}(r))}", from=1-2, to=2-3]
	\arrow["{\mathcal{F}(\mathbb{T}(s))}", from=2-3, to=3-2]
\end{tikzcd}\]

Note that $qp$ and $sr$ are two sequences of Type II moves from $\Pi \circ \Lambda \circ \Gamma$ to $\Pi \cdot \Lambda \cdot \Gamma$, so $\mathbb{T}(qp) = \mathbb{T}(sr)$ by Proposition \ref{prop: tangle cobordism is independent of Type II move sequence}. Then we have $\mathcal{F}(\mathbb{T}(q)) \circ \mathcal{F}(\mathbb{T}(p)) = \mathcal{F}(\mathbb{T}(qp)) = \mathcal{F}(\mathbb{T}(sr)) = \mathcal{F}(\mathbb{T}(s)) \circ \mathcal{F}(\mathbb{T}(r))$.

This concludes the proof of the associativity axiom and of Theorem \ref{thm: action}.

\bibliographystyle{plain}
\bibliography{refs}

\end{document}